\theoremstyle{plain}
\newtheorem{theorem}{Theorem}[section]
\newtheorem{prepos}[theorem]{Proposition}
\newtheorem{corol}[theorem]{Corollary}
\theoremstyle{definition}
\newtheorem{definition}[theorem]{Definition}
\newtheorem{remark}[theorem]{Remark}
\renewcommand{\Re}{\operatorname{Re}}
\begin{document}
\vspace{\baselineskip} \thispagestyle{empty}

\title{Circulants and critical points of polynomials}

\author{
Olga Kushel%\thanks{Research was partially supported by
%National Science Foundation of China, grant no. BC0710062.}
\\
%\small Department of Mathematics,\\
\small Shanghai Jiao Tong University\\
%\small 800 Dong Chuan Road, 200240,\\
%\small Shanghai, P.R. China\\
\small Email: {\tt kushel@sjtu.edu.cn}
 \and
Mikhail Tyaglov%\thanks{Shanghai Oriental Scholar. Research was supported by
%Russian Science Foundation, grant no. 14-11-00022.}
\\
%\small Department of Mathematics,\\
\small Shanghai Jiao Tong University\\
\small and\\
%\small School of Mathematics and Mechanics???,\\
\small Far Eastern Federal University\\
%\small 800 Dong Chuan Road, 200240,\\
%\small Shanghai, P.R. China\\
\small Email: {\tt tyaglov@sjtu.edu.cn}}

\date{\small \today}

\maketitle

\abstract{We prove that for any circulant matrix $C$ of size $n\times n$ with the monic characteristic polynomial~$p(z)$,
the spectrum of its $(n-1)\times(n-1)$ submatrix $C_{n-1}$ constructed with first $n-1$ rows and columns of $C$ consists of all critical points of $p(z)$. Using this fact we provide
a simple proof for the Schoenberg conjecture recently proved by R.\,Pereira and S.\,Malamud. We also
prove full generalization of a higher order Schoenberg-type conjecture proposed by M.\,de Bruin and A.\,Sharma and recently proved by W.S.\,Cheung
and T.W.\,Ng. in its original form, i.e. for polynomials whose mass centre of roots equals zero. In this particular case, our inequality is
stronger than it was conjectured by de Bruin and Sharma. Some Schmeisser's-like results on majorization of critical point of
polynomials are also obtained.}

\setcounter{equation}{0}
%%%%%%%%%%%%%%%%%%%%%%%%%%%%%%%%%%%%%%%%%%%%%%%%%%%%%%%%%%%%%%%%%%%%%%%%%%%%%%%%%%%%
\section{Introduction}
%%%%%%%%%%%%%%%%%%%%%%%%%%%%%%%%%%%%%%%%%%%%%%%%%%%%%%%%%%%%%%%%%%%%%%%%%%%%%%%%%%%%

In~\cite{Schoenberg} I.\,Schoenberg conjectured the following statement

\vspace{2mm}

\noindent{\textbf{Schoenberg's conjecture.}} \textit{Let $\lambda_1$, $\lambda_2$, \ldots, $\lambda_n$ be the roots of a polynomial $p$ of degree
$n\geqslant2$ such that $\sum_{j=1}^n\lambda_j=0$, and let $w_1$, $w_2$, \ldots, $w_{n-1}$ be the roots of the derivative $p'$. Then
\begin{equation*}
\sum_{k=1}^{n-1}|w_k|^2\leqslant\dfrac{n-2}{n}\sum_{j=1}^n|\lambda_j|^2,
\end{equation*}
where equality holds if, and only if, all $\lambda_j$ lie on a straight line.}

\vspace{2mm}

Later M.\, de Bruin, K.\,Ivanov, and A.\,Sharma~\cite{deBruin_Ivan_Sharma} and independently Katsoprinakis~\cite{Katsoprinakis}
showed that Schoenberg's conjecture is equivalent to the following inequality
\begin{equation}\label{Schoenberg_ineq}
\sum_{k=1}^{n-1}|w_k|^2\leqslant\dfrac1{n^2}\left|\sum_{j=1}^n\lambda_j\right|^2+\dfrac{n-2}{n}\sum_{j=1}^n|\lambda_j|^2,
\end{equation}
which was conjectured to be true for all complex polynomials with no restrictions on their roots, and equality holds if, and only if, all $\lambda_j$ lie on a straight line. The inequality~\eqref{Schoenberg_ineq}
was proved by R.\,Pereira~\cite{Pereira} and independently by S.\,Malamud~\cite{Malamud0,Malamud}.

In~\cite{deBruin_Sharma} M.\,de Bruin and A.\,Sharma proposed the following higher order Schoenberg-type conjecture.
\vspace{2mm}

\noindent{\textbf{De Bruin and Sharma's conjecture.}} \textit{Let $\lambda_1$, $\lambda_2$, \ldots, $\lambda_n$ be the roots of a polynomial $p$ of degree
$n\geqslant2$ such that $\sum_{j=1}^n\lambda_j=0$, and let $w_1$, $w_2$, \ldots, $w_{n-1}$ be the roots of the derivative $p'$. Then
\begin{equation*}
\sum_{k=1}^{n-1}|w_k|^4\leqslant\dfrac{n-4}{n}\sum_{j=1}^n|\lambda_j|^4+\dfrac{2}{n^2}\left(\sum_{j=1}^n|\lambda_j|^2\right)^2,
\end{equation*}
where equality holds if, and only if, all $\lambda_j$ lie on a straight line passing through the origin of the complex plane.}

\vspace{2mm}

This conjecture was proved by W.\,Cheung and T.\,Ng in~\cite{Cheung_Ng_1} with use their approach of companion matrices (they also give an
alternative proof of Schoenberg's conjecture). Later, in~\cite{Cheung_Ng_2} they generalized this approach by one-rank perturbation technique. Note that in~\cite{Cheung_Ng_2},
W.\,Cheung and T.\,Ng partially rediscover the so-called ``one-rank perturbation method'' developed by Yu. Barkovsky in his PhD thesis~\cite{Bark_thesis} which, unfortunately, was only partially published. Some ideas of this method can be found among
problems in the lecture notes~\cite{Bark_lectures}. From the point of view of this method it is easy to see that, indeed,
the rank one perturbation approach of W.\,Cheung and T.\,Ng generalizes the approach of differentiators used by R.\,Pereiera in~\cite{Pereira}.

S.\,Malamud~\cite{Malamud0,Malamud} proved Schoenberg's conjecture by using another technique (with the same
underlying ideas, in fact, basing on the same one-rank perturbation method). To prove the conjecture he established the fact saying that for any polynomial $p$ of degree $n$,
there exists a normal matrix $A$ with $p(z)=\det(zI-A)$ such that the spectrum of its $(n-1)\times(n-1)$-submatrix $A_{n-1}$ constructed with the first $n-1$ rows and the first $n-1$ columns consists of the roots of the derivative of $p$ (Proposition 4.2 in~\cite{Malamud}).

In this work, we use Barkovsky's one-rank perturbation method (in its simplified Cheung-Ng's form) to prove that
circulant matrices is an example to Malamud's Proposition 4.2, thus giving explicit simple examples of differentiators for a certain
class of matrices. Note that W.\,Cheung and T.\,Ng also constructed a differentiator explicitly. But they started from a diagonal matrix, so their differentiator has a more complicated form, thus as a consequence, to work with such a differentiator is not easy.

It is known~\cite{Davis} that if $p(z)$ is a complex polynomial, then there exists a circulant matrix
\begin{equation}\label{circulant.matrix}
C=
\begin{pmatrix}
c_0    &c_1    &c_2   &\dots &c_{n-2}&c_{n-1}\\
c_{n-1}&c_0    &c_1   &\dots &c_{n-3}&c_{n-2}\\
c_{n-2}&c_{n-1}&c_0   &\dots &c_{n-4}&c_{n-3}\\
\vdots &\vdots &\vdots&\ddots&\vdots &\vdots \\
c_2    &c_3    &c_4   &\dots &c_0    &c_1    \\
c_1    &c_2    &c_3   &\dots &c_{n-1}&c_0    \\
\end{pmatrix}
\end{equation}
whose characteristic polynomial  (up to a constant factor) is $p(z)$. It turned out (Theorem~\ref{Theorem.circulant.derivative}) that the derivative $p'(z)$ is the characteristic polynomial (up to a constant factor) of the submatrix $C_{n-1}$ constructed with the first $n-1$ rows and columns of $C$. Let $\{w_1,w_2,\cdots,w_{n-1}\}$ be the eigenvalues of the matrix $C_{n-1}$. The entries of the matrix $C_{n-1}$ depend linearly~\cite{Davis} on the roots of the polynomial~$p(z)$, so we can use Schur's theorem~\cite{Schur} (Theorem~\ref{Theorem.Schur}) to estimate sums of squares of absolute values of $w_k$ to prove Schoenberg's conjecture. Then applying the same theorem to the matrix $C_{n-1}^2$ we prove the following fact which generalizes Cheung and Ng's theorem (de Briun and Sharma's conjecture).

\begin{theorem}\label{Theorem.shaurma}
Let $\lambda_1$, $\lambda_2$, \ldots, $\lambda_n$ be the roots of a polynomial $p$ of degree
$n\geqslant2$, and let $w_1$, $w_2$, \ldots, $w_{n-1}$ be the roots of its derivative $p'$. Then
\begin{equation}\label{general.shaurma}
\begin{array}{c}
\displaystyle\sum_{k=1}^{n-1}|w_k|^4\leqslant\dfrac{n-6}{n}\sum_{j=1}^n|\lambda_j|^4+\dfrac{1}{n^2}\left(\sum_{j=1}^n|\lambda_j|^2\right)^2+
\dfrac1{n^2}\left|\sum_{j=1}^n\lambda_j^2-\dfrac1{n^2}\left(\sum_{j=1}^n\lambda_j\right)^2\right|^2+\\
\\
\displaystyle+\dfrac{2}{n}\sum_{j=1}^n|\lambda_j|^2\left|\lambda_j+\dfrac1n\sum_{j=1}^n\lambda_j\right|^2-\dfrac4{n^3}\sum_{j=1}^n|\lambda_j|^2\left|\sum_{j=1}^n\lambda_j\right|^2,
\end{array}
\end{equation}
where equality holds if, and only if, all $\lambda_j$ lie on a straight line.
\end{theorem}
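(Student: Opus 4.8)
The plan is to realise the critical points $w_k$ as the spectrum of a concrete matrix and then apply Schur's theorem to its square. By Theorem~\ref{Theorem.circulant.derivative} we may take the circulant $C$ of \eqref{circulant.matrix} whose eigenvalues are $\lambda_1,\dots,\lambda_n$ (assigned to the Fourier modes in any fixed order) and whose truncation $C_{n-1}$ has eigenvalues $w_1,\dots,w_{n-1}$. Writing $\omega=e^{2\pi i/n}$, the entries are recovered from the roots by the inverse transform $c_k=\frac1n\sum_{j}\lambda_j\omega^{-jk}$, so that by Parseval $\sum_k|c_k|^2=\frac1n\sum_j|\lambda_j|^2$ and $c_0=\frac1n\sum_j\lambda_j$. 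Since the eigenvalues of $C_{n-1}^2$ are exactly $w_1^2,\dots,w_{n-1}^2$, Schur's theorem (Theorem~\ref{Theorem.Schur}) gives $\sum_{k}|w_k|^4=\sum_k|w_k^2|^2\le\|C_{n-1}^2\|_F^2$, with equality if and only if $C_{n-1}^2$ is normal. Thus it suffices to compute $\|C_{n-1}^2\|_F^2$ and to show it equals the right-hand side of \eqref{general.shaurma}.

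To compute this norm I would compare $C_{n-1}^2$ with the genuine circulant $C^2$. Truncating the convolution defining $(C^2)_{ij}$ to the range $0\le \ell\le n-2$ drops exactly the term $\ell=n-1$, so for $i,j\in\{0,\dots,n-2\}$ one has $(C_{n-1}^2)_{ij}=(C^2)_{ij}-a_ib_j$, where $a_i=c_{n-1-i}$ and $b_j=c_{j+1}$ (indices mod $n$). In other words $C_{n-1}^2=(C^2)_{n-1}-ab^{\top}$ is a rank-one correction of the top-left block of the circulant $C^2$, whence
\begin{equation*}
\|C_{n-1}^2\|_F^2=\|(C^2)_{n-1}\|_F^2-2\Re\operatorname{tr}\bigl((C^2)_{n-1}(ab^{\top})^*\bigr)+\|a\|^2\,\|b\|^2 .
\end{equation*}

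Each of the three pieces is then evaluated by the circulant bookkeeping already used for the Schoenberg inequality. Counting how often each symbol $d_m$ of the circulant $C^2$ occurs in its $(n-1)\times(n-1)$ block gives $\|(C^2)_{n-1}\|_F^2=(n-2)\sum_m|d_m|^2+|d_0|^2$, and since $C^2$ has eigenvalues $\lambda_j^2$ this is $\frac{n-2}{n}\sum_j|\lambda_j|^4+\frac1{n^2}\bigl|\sum_j\lambda_j^2\bigr|^2$. Because $a$ and $b$ are each a cyclic relabelling of $(c_1,\dots,c_{n-1})$, we get $\|a\|^2\|b\|^2=\bigl(\sum_k|c_k|^2-|c_0|^2\bigr)^2$. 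For the cross term I would substitute the inverse-transform expressions for $c_u$, $c_v$ and $d_{u+v}$ and carry out the two geometric sums over the truncated index range using $\sum_{u=1}^{n-1}\omega^{(q-p)u}=n\delta_{pq}-1$; the resulting Kronecker contributions collapse to $\frac1n\sum_j|\lambda_j|^4$, a term in $\overline{\sum_j\lambda_j}\,\sum_j\lambda_j^2\overline{\lambda_j}$, and a term in $\overline{\bigl(\sum_j\lambda_j\bigr)^2}\,\sum_j\lambda_j^2$. Substituting $\sum_k|c_k|^2=\frac1n\sum|\lambda_j|^2$, $c_0=\frac1n\sum\lambda_j$, $\sum_m|d_m|^2=\frac1n\sum|\lambda_j|^4$, $d_0=\frac1n\sum\lambda_j^2$ and collecting, the whole expression regroups into the symmetric combination on the right of \eqref{general.shaurma}: the mixed contributions assemble into the ``variance'' modulus $\bigl|\sum_j\lambda_j^2-\frac1n(\sum_j\lambda_j)^2\bigr|^2$ and, once the square $\|a\|^2\|b\|^2$ is expanded, into $\sum_j|\lambda_j|^2\bigl|\lambda_j+\frac1n\sum\lambda_j\bigr|^2$.

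The routine but lengthy part is this last regrouping of the cross term; getting all the contributions to land on exactly the stated coefficients is where the bulk of the bookkeeping lies, and it is the step I expect to be the main obstacle. The only genuine inequality used is the single application of Schur's theorem, so equality in \eqref{general.shaurma} holds precisely when $C_{n-1}^2$ is normal. To finish I would argue that normality of $C_{n-1}^2$ forces the $\lambda_j$ to be collinear, the converse being immediate, since for collinear roots $C_{n-1}$ is itself normal by the Schoenberg-case analysis and hence so is $C_{n-1}^2$. Establishing this normality-to-collinearity implication carefully, and checking that it is independent of the chosen ordering of the $\lambda_j$ along the Fourier modes, is the delicate point in the equality statement.
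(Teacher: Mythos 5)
Your proof of the inequality itself is correct, and it takes a genuinely different computational route from the paper's. The paper bounds $\sum_k|w_k|^4$ by $\operatorname{tr}(B\widetilde B)$ with $B=C_{n-1}C_{n-1}^*$ and $\widetilde B=C_{n-1}^*C_{n-1}$, writes each of these as a rank-one correction of $A_{n-1}$, the truncation of the circulant $A=CC^*$ (eigenvalues $|\lambda_j|^2$), and then evaluates four traces, the heaviest being $\operatorname{tr}(A_{n-1}D)$. You instead use a single rank-one correction $C_{n-1}^2=(C^2)_{n-1}-ab^{\top}$ of the truncation of the circulant $C^2$ (eigenvalues $\lambda_j^2$); this identity is valid (the deleted index $\ell=n-1$ in the convolution contributes exactly $c_{n-1-i}c_{j+1}$), and the three-term Frobenius expansion is shorter than the paper's four traces. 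Your deferred cross term does come out as you predict: substituting $c_u=\frac1n\sum_j\lambda_j\omega_{j-1}^{-u}$, $d_m=\frac1n\sum_j\lambda_j^2\omega_{j-1}^{-m}$ and using $\sum_{u=1}^{n-1}(\omega_{p-1}/\omega_{r-1})^{u}=n\delta_{pr}-1$ twice gives
\begin{equation*}
\operatorname{tr}\bigl((C^2)_{n-1}(ab^{\top})^*\bigr)=\frac1n\sum_j|\lambda_j|^4-\frac{2}{n^2}\,\overline{\Bigl(\sum_j\lambda_j\Bigr)}\sum_j|\lambda_j|^2\lambda_j+\frac{1}{n^3}\,\overline{\Bigl(\sum_j\lambda_j\Bigr)^{2}}\sum_j\lambda_j^2,
\end{equation*}
and assembling your three pieces reproduces exactly the paper's expression \eqref{trace.BB} for the Schur bound, so the bookkeeping you postponed is not an obstacle. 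One warning about ``landing on the stated coefficients'': what the computation (yours and the paper's own \eqref{formula.1}) actually produces as the third term is $\frac1{n^2}\bigl|\sum_j\lambda_j^2-\frac1n(\sum_j\lambda_j)^2\bigr|^2$, not $\frac1{n^2}\bigl|\sum_j\lambda_j^2-\frac1{n^2}(\sum_j\lambda_j)^2\bigr|^2$ as printed in the statement; a check with roots $1,2,3$ (where equality must hold) confirms the printed exponent is a typo, so do not contort your computation to match it. Your worry about the ordering of the $\lambda_j$ along the Fourier modes is harmless: both sides of \eqref{general.shaurma} depend only on the multiset of roots.

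The genuine gap is the equality case, and your instinct that it is the delicate point understates the problem: the implication you propose to establish, namely that normality of $C_{n-1}^2$ forces the $\lambda_j$ to be collinear, is false, so no argument can complete that plan. Take $p(z)=z^3-1$, whose circulant is the cyclic shift ($c_1=1$, all other $c_k=0$). Then $C_2=\begin{pmatrix}0&1\\0&0\end{pmatrix}$ is not normal, but $C_2^2=0$ is normal; correspondingly both sides of \eqref{general.shaurma} equal $0$ (the critical points are $w_1=w_2=0$, and on the right $-3+1+0+2-0=0$), yet the cube roots of unity are not collinear. Squaring destroys the chain ``equality in Schur $\Leftrightarrow$ $C_{n-1}$ normal $\Leftrightarrow$ collinear roots'' that settles the second-order (Schoenberg) case via Theorem~\ref{Theorem.normal.SUBmatrix}: in general $X^2$ can be normal (e.g.\ nilpotent of order two) while $X$ is not. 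The half you did prove, collinear $\Rightarrow$ $C_{n-1}$ normal $\Rightarrow$ $C_{n-1}^2$ normal $\Rightarrow$ equality, is the only half that is true. For what it is worth, the paper's own proof is silent on the equality claim, so you are not missing an argument the authors have; rather, the ``only if'' part of the statement as printed is contradicted by the example above.
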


For the specific case considered in de Bruin and Sharma's conjecture, we obtain the following
inequality which is stronger than the original conjecture.
\begin{corol}\label{Corollary.shaurma}
Let $\lambda_1$, $\lambda_2$, \ldots, $\lambda_n$ be the roots of a polynomial $p$ of degree
$n\geqslant2$, and let $w_1$, $w_2$, \ldots, $w_{n-1}$ be the roots of its derivative $p'$. If $\displaystyle\sum_{j=1}^n\lambda_j=0$, then
\begin{equation*}
\sum_{k=1}^{n-1}|w_k|^4\leqslant\dfrac{n-4}{n}\sum_{j=1}^n|\lambda_j|^4+\dfrac{1}{n^2}\left(\sum_{j=1}^n|\lambda_j|^2\right)^2+
\dfrac1{n^2}\left|\sum_{j=1}^n\lambda_j^2\right|^2,
\end{equation*}
where equality holds if, and only if, all $\lambda_j$ lie on a straight line passing through the origin of the complex plane.
\end{corol}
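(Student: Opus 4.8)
The plan is to derive Corollary~\ref{Corollary.shaurma} directly from Theorem~\ref{Theorem.shaurma} by specializing the general inequality~\eqref{general.shaurma} to the case $\sum_{j=1}^n\lambda_j=0$ and then checking that the equality condition sharpens as claimed. First I would substitute $\sum_{j=1}^n\lambda_j=0$ into each term on the right-hand side of~\eqref{general.shaurma}. The last term $\dfrac4{n^3}\sum_{j=1}^n|\lambda_j|^2\left|\sum_{j=1}^n\lambda_j\right|^2$ vanishes identically since the inner sum is zero. In the fourth term, $\left|\lambda_j+\dfrac1n\sum_{j=1}^n\lambda_j\right|^2$ collapses to $|\lambda_j|^2$, so that term becomes $\dfrac2n\sum_{j=1}^n|\lambda_j|^4$. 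Likewise, in the third term the subtraction $-\dfrac1{n^2}\left(\sum_{j=1}^n\lambda_j\right)^2$ inside the modulus disappears, leaving $\dfrac1{n^2}\left|\sum_{j=1}^n\lambda_j^2\right|^2$.

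Next I would collect the $\sum|\lambda_j|^4$ contributions. The leading term of~\eqref{general.shaurma} contributes $\dfrac{n-6}n\sum_{j=1}^n|\lambda_j|^4$, and the simplified fourth term adds $\dfrac2n\sum_{j=1}^n|\lambda_j|^4$; combining gives
\begin{equation*}
\frac{n-6}n\sum_{j=1}^n|\lambda_j|^4+\frac2n\sum_{j=1}^n|\lambda_j|^4=\frac{n-4}n\sum_{j=1}^n|\lambda_j|^4,
\end{equation*}
which is precisely the coefficient appearing in the corollary. The remaining two surviving terms, $\dfrac1{n^2}\left(\sum_{j=1}^n|\lambda_j|^2\right)^2$ and $\dfrac1{n^2}\left|\sum_{j=1}^n\lambda_j^2\right|^2$, carry over unchanged. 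Assembling these yields exactly the stated inequality.

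For the equality case I would argue as follows. By Theorem~\ref{Theorem.shaurma}, equality in~\eqref{general.shaurma} holds if and only if all $\lambda_j$ lie on a straight line. Under the hypothesis $\sum_{j=1}^n\lambda_j=0$ the centroid of the roots is the origin, and a straight line through the centroid of a set of points is a line through the origin; hence equality in the specialized inequality holds if and only if all $\lambda_j$ lie on a straight line passing through the origin. I would make precise the elementary geometric fact that if points summing to zero are collinear, their common line must contain their centroid (the origin), which upgrades the equality characterization from ``a straight line'' to ``a straight line through the origin'' as required.

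The computation is entirely routine term-by-term substitution, so there is no genuine analytic obstacle; the only point demanding care is the bookkeeping when merging the $\sum|\lambda_j|^4$ coefficients and verifying that the equality condition transfers correctly under the centroid constraint. The main thing to watch is ensuring that no cross-terms are silently dropped—in particular confirming that the vanishing of $\sum_{j=1}^n\lambda_j$ genuinely eliminates the last term and simplifies, rather than merely reduces, the fourth term.
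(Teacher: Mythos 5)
Your proof is correct and is essentially the derivation the paper intends: Corollary~\ref{Corollary.shaurma} is obtained by substituting $\sum_{j=1}^n\lambda_j=0$ into~\eqref{general.shaurma}, merging the $\frac{n-6}{n}$ and $\frac{2}{n}$ coefficients of $\sum_j|\lambda_j|^4$ into $\frac{n-4}{n}$, and noting that under the centroid constraint collinearity forces the line through the origin. The term-by-term bookkeeping and the equality-case argument both check out.
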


It is easy to see that de Bruin and Sharma's conjecture follows from Corollary~\ref{Corollary.shaurma}, since
\begin{equation*}
\left|\sum_{j=1}^n\lambda_j^2\right|\leqslant\sum_{j=1}^n|\lambda_j|^2.
\end{equation*}

Finally, using some fact on singular values of matrices we prove a Schmeisser-like inequality on the absolute values
of roots of derivatives (critical points) of polynomials.
\begin{theorem}\label{Theorem.Cheung_Ng.majorization}
Let $p$ be a polynomial of degree $n\geqslant2$ with zeroes $\lambda_j$, $j=1,\ldots,n$, and
critical points~$w_k$, $k=1,\ldots,n-1$. If $q$ be a polynomial with zeroes $|\lambda_j|^2$, $j=1,\ldots,n$, and
critical points $\eta_k$, $k=1,\ldots,n-1$, then
%w
\begin{equation}\label{Cheung_Ng.majorization.like}
(\Phi(|w_1|),\ldots,\Phi(|w_{n-1}|))\prec_{w}(\Phi(\sqrt{\eta_1}),\ldots,\Phi(\sqrt{\eta_{n-1}})),
\end{equation}
for any increasing function $\Phi:[0,+\infty)\mapsto\mathbb{R}$ such that $\Phi\circ\exp$ is convex on $\mathbb{R}$.
\end{theorem}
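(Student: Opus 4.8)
The plan is to realise both families of critical points as eigenvalues of compressions of circulant matrices and then to compare them through singular values. Choose a circulant matrix $C$ of the form~\reff{circulant.matrix} whose characteristic polynomial is $p$; since circulant matrices are normal, its eigenvalues are exactly $\lambda_1,\ldots,\lambda_n$, and by Theorem~\ref{Theorem.circulant.derivative} the critical points $w_1,\ldots,w_{n-1}$ are precisely the eigenvalues of the compression $C_{n-1}$. The first key observation is that $B\eqbd C^*C$ is again circulant (every circulant is a polynomial in the cyclic shift, and such polynomials commute and are closed under taking the adjoint), that it is Hermitian positive semidefinite, and that, because $C$ is normal, its eigenvalues are $|\lambda_1|^2,\ldots,|\lambda_n|^2$. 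Hence the characteristic polynomial of $B$ equals $q$ up to a constant factor, and a second application of Theorem~\ref{Theorem.circulant.derivative} shows that the critical points $\eta_1,\ldots,\eta_{n-1}$ of $q$ are exactly the eigenvalues of the compression $B_{n-1}$; in particular the $\eta_k$ are nonnegative, so $\sqrt{\eta_k}$ is well defined.

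Next I would relate $B_{n-1}$ to the singular values of $C_{n-1}$. Writing $E\colon\mathbb{C}^{n-1}\to\mathbb{C}^n$ for the embedding onto the first $n-1$ coordinates, one has $C_{n-1}=E^*CE$ and $B_{n-1}=E^*C^*CE$. Inserting $I=EE^*+(I-EE^*)$ between $C^*$ and $C$ yields the decomposition
\begin{equation*}
B_{n-1}=C_{n-1}^*C_{n-1}+E^*C^*(I-EE^*)CE,
\end{equation*}
in which the second summand is positive semidefinite. By Weyl's monotonicity theorem the eigenvalues therefore satisfy $\eta_{[k]}\geqslant\sigma_k^2$, where $\sigma_1\geqslant\cdots\geqslant\sigma_{n-1}\geqslant0$ are the singular values of $C_{n-1}$ and $\eta_{[1]}\geqslant\cdots\geqslant\eta_{[n-1]}$ are the $\eta_k$ in decreasing order; equivalently $\sqrt{\eta_{[k]}}\geqslant\sigma_k$ for every $k$.

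Now I would invoke the classical theorem of Weyl on the multiplicative majorization of eigenvalues by singular values. Denoting by $|w_{[1]}|\geqslant\cdots\geqslant|w_{[n-1]}|$ the moduli of the critical points in decreasing order, this theorem gives $\prod_{j=1}^{k}|w_{[j]}|\leqslant\prod_{j=1}^{k}\sigma_j$ for every $k$, with equality at $k=n-1$. Combined with $\sigma_j\leqslant\sqrt{\eta_{[j]}}$ it yields the weak log-majorization $\prod_{j=1}^{k}|w_{[j]}|\leqslant\prod_{j=1}^{k}\sqrt{\eta_{[j]}}$ for all $k$, that is, $(\log|w_1|,\ldots,\log|w_{n-1}|)\prec_w(\log\sqrt{\eta_1},\ldots,\log\sqrt{\eta_{n-1}})$. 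Finally, since $\Phi$ is increasing and $g\eqbd\Phi\circ\exp$ is convex, the map $g$ is convex and nondecreasing, and the standard fact that a convex nondecreasing function preserves weak majorization gives
\begin{equation*}
\bigl(g(\log|w_1|),\ldots,g(\log|w_{n-1}|)\bigr)\prec_w\bigl(g(\log\sqrt{\eta_1}),\ldots,g(\log\sqrt{\eta_{n-1}})\bigr),
\end{equation*}
which is exactly~\reff{Cheung_Ng.majorization.like} because $g(\log t)=\Phi(t)$.

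The main obstacle is the middle step: one must correctly identify $q$ with the characteristic polynomial of the circulant $C^*C$ and then verify the compression inequality $B_{n-1}\succeq C_{n-1}^*C_{n-1}$ together with the correct decreasing orderings, after which Weyl's eigenvalue/singular-value majorization and the majorization-preservation lemma are routine. A minor technical point, disposed of by a limiting argument, is the degenerate case where $C_{n-1}$ is singular, so that some $|w_k|$ or $\sqrt{\eta_k}$ vanishes and the logarithms above must be read as limits.
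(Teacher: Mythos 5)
Your proof is correct, and it departs from the paper's argument precisely at the step where care is needed; in fact, your version repairs a genuine flaw in the paper's own proof. The paper follows the same skeleton: realize $p$ as the characteristic polynomial of a circulant $C$ and $q$ as that of the circulant $A=CC^{*}$ (Proposition~\ref{Proposition.abs.val}), apply Theorem~\ref{Theorem.circulant.derivative} to both, and compare the eigenvalues of $C_{n-1}$ with singular values. But the paper then asserts the identity $A_{n-1}=C_{n-1}C_{n-1}^{*}$, so that the $\sqrt{\eta_k}$ would literally be the singular values of $C_{n-1}$, and concludes by a single citation of the Weyl majorant theorem. That identity is false in general: by the paper's own formulas \reff{matrix.B.entries} and \reff{matrix.B.wave.entries}, the principal submatrix $A_{n-1}$ exceeds $C_{n-1}C_{n-1}^{*}$ (and likewise $C_{n-1}^{*}C_{n-1}$) by a rank-one positive semidefinite matrix, which vanishes only if $c_1=\dots=c_{n-1}=0$; already for $n=2$ one has $A_{1}=|c_0|^2+|c_1|^2$ while $C_{1}C_{1}^{*}=|c_0|^2$. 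Your decomposition $B_{n-1}=C_{n-1}^{*}C_{n-1}+E^{*}C^{*}(I-EE^{*})CE$ identifies exactly this discrepancy, and your two extra ingredients --- Weyl's monotonicity theorem giving $\sqrt{\eta_{[k]}}\geqslant\sigma_k$, then Weyl's multiplicative majorization of eigenvalue moduli by singular values combined with the standard lemma on convex nondecreasing functions preserving weak majorization --- are precisely what is needed, since the conclusion only requires that the $\sqrt{\eta_{[k]}}$ dominate the singular values of $C_{n-1}$, not that they coincide with them. The only point left to tidy is the degenerate case of vanishing moduli, which, as you note, is handled by a limiting argument or by working with products rather than logarithms.
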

\noindent Here the symbol $\prec_{w}$ means that the set $\{\sqrt{\eta_1},\ldots,\sqrt{\eta_{n-1}}\}$ weakly majorizes the set $\{|w_1|,\ldots,|w_{n-1}|\}$ (for definition of majorization see Section~\ref{section:majorization}).

For the case when $p(0)=0$ this theorem follows from~\cite[Theorem 2.1]{Cheung_Ng_1}, but for an arbitrary
complex polynomials the estimate~\eqref{Cheung_Ng.majorization.like} seems to be the best known.

As a by-product, we obtain the following curious statement.
\begin{theorem}\label{Theorem.critical.squares}
Let $p(z)$ be a real polynomials with \textbf{positive} roots $\lambda_j$, $j=1,\ldots,n$, and let $\xi_k$, $k=1,\ldots,n-1$,
be its critical points. If $\eta_k$, $k=1,\ldots,n-1$, are the critical points of the polynomial $q$ with roots $\lambda_j^2$, then
\begin{equation*}\label{crit.point.majorization}
(\Phi(\xi_1),\ldots,\Phi(\xi_{n-1}))\prec_{w}(\Phi(\sqrt{\eta_1}),\ldots,\Phi(\sqrt{\eta_{n-1}})),
\end{equation*}
for any increasing function $\Phi:[0,+\infty)\mapsto\mathbb{R}$ such that $\Phi\circ\exp$ is convex on $\mathbb{R}$.
\end{theorem}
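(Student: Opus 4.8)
The plan is to obtain this statement as a direct specialization of Theorem~\ref{Theorem.Cheung_Ng.majorization}, invoking only the extra hypothesis that the roots $\lambda_j$ are positive reals. Two points have to be matched up: that the auxiliary polynomial $q$ in both theorems is literally the same, and that the critical points $\xi_k$ of $p$ coincide with the moduli $|w_k|$ appearing on the left-hand side of~\eqref{Cheung_Ng.majorization.like}. Once these two identifications are made, the conclusion of Theorem~\ref{Theorem.Cheung_Ng.majorization} reads exactly as the asserted inequality, and nothing further is required.

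First I would observe that, since each $\lambda_j>0$, we have $|\lambda_j|^2=\lambda_j^2$. Hence the polynomial with roots $|\lambda_j|^2$ used in Theorem~\ref{Theorem.Cheung_Ng.majorization} is precisely the polynomial $q$ with roots $\lambda_j^2$ of the present statement, so the critical points $\eta_k$ are the same in both theorems and the right-hand side $(\Phi(\sqrt{\eta_1}),\ldots,\Phi(\sqrt{\eta_{n-1}}))$ of~\eqref{Cheung_Ng.majorization.like} is unchanged. Next I would argue that the critical points of $p$ are themselves positive reals: since $p$ is a real polynomial all of whose zeroes lie in $(0,+\infty)$, Rolle's theorem (equivalently, the Gauss--Lucas theorem, whose convex hull here degenerates to a segment of the real axis) forces every critical point of $p$ to lie in $(\min_j\lambda_j,\max_j\lambda_j)\subset(0,+\infty)$. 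Therefore $w_k=\xi_k>0$, whence $|w_k|=\xi_k$ for each $k$, and the left-hand side of~\eqref{Cheung_Ng.majorization.like} becomes $(\Phi(\xi_1),\ldots,\Phi(\xi_{n-1}))$. Substituting these two identifications into Theorem~\ref{Theorem.Cheung_Ng.majorization} yields the claimed weak majorization.

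The main (and essentially only) obstacle is conceptual rather than computational: one must confirm that the critical points are genuinely real and positive so that the replacement $|w_k|=\xi_k$ is legitimate; after that the result is immediate and no new estimate is needed. I would also note that since weak majorization is a statement about the underlying multisets rather than any fixed labelling, the arbitrary indexing of the $\xi_k$ and $\eta_k$ is harmless, and that $\xi_k>0$ guarantees each argument of $\Phi$ stays in the domain $[0,+\infty)$ on which $\Phi$ is assumed increasing with $\Phi\circ\exp$ convex.
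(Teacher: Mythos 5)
Your proposal is correct. With all $\lambda_j>0$ one has $|\lambda_j|^2=\lambda_j^2$, so the auxiliary polynomial $q$ of Theorem~\ref{Theorem.Cheung_Ng.majorization} is literally the $q$ of Theorem~\ref{Theorem.critical.squares}; and since $p$ is real with roots in $(0,+\infty)$, Gauss--Lucas (or Rolle) puts its critical points in the convex hull of the roots, so they are positive and $|w_k|=\xi_k$, after which Theorem~\ref{Theorem.Cheung_Ng.majorization} reads exactly as the claim. This is, however, packaged differently from the paper. The paper does not cite Theorem~\ref{Theorem.Cheung_Ng.majorization} as a black box; at the end of Section~\ref{section:majorization} it re-runs that theorem's proof with $C$ replaced by the self-adjoint circulant $\widehat{C}=\sqrt{CC^*}$ of Proposition~\ref{Proposition.abs.val}, whose eigenvalues are $|\lambda_j|$ and whose submatrix $\widehat{C}_{n-1}$ has the $\xi_k$ as eigenvalues; this yields a more general assertion, valid for arbitrary complex $\lambda_j$, in which $\xi_k$ are the critical points of $\prod_j\bigl(z-|\lambda_j|\bigr)$, and Theorem~\ref{Theorem.critical.squares} is its positive-root instance. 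When unfolded, the two arguments coincide: for positive roots the circulant of $p$ is itself Hermitian positive semidefinite (Proposition~\ref{Proposition.real.EV}), so $\sqrt{CC^*}=C$ and the paper's re-run is precisely the proof of Theorem~\ref{Theorem.Cheung_Ng.majorization} applied to $p$. Your reduction buys brevity and an explicit logical dependence --- and it even recovers the paper's general version, by applying Theorem~\ref{Theorem.Cheung_Ng.majorization} to the polynomial with roots $|\lambda_j|$ --- while the paper's phrasing stays inside the circulant framework and states the result for arbitrary complex roots. One cosmetic repair: Gauss--Lucas gives the \emph{closed} interval $[\min_j\lambda_j,\max_j\lambda_j]$ (with repeated roots a critical point may equal a root, and if all roots coincide your open interval is empty); this does not affect the conclusion $\xi_k>0$.
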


The paper is organized as follows. In Section~\ref{Section:circulants} we  remind to the reader some necessary facts on spectra of circulants and prove our main Theorem~\ref{Theorem.circulant.derivative}. Section~\ref{Section:Conjectures} is devoted
to the proof of Theorem~\ref{Theorem.shaurma} on circulants. In Section~\ref{section:majorization} we discuss the usefulness of circulants
for proving theorems on majorization of the absolute values of critical points of polynomials. In particular, we prove Theorems~\ref{Theorem.Cheung_Ng.majorization} and~\ref{Theorem.critical.squares}. The last Section~\ref{Section:open.questions} we discuss other possibilities to use Theorem~\ref{Theorem.circulant.derivative} for localization of roots of polynomials and pose some problems that can be of interest in this sense.

\setcounter{equation}{0}
%%%%%%%%%%%%%%%%%%%%%%%%%%%%%%%%%%%%%%%%%%%%%%%%%%%%%%%%%%%%%%%%%%%%%%%%%%%%%%%%%%%%
\section{Circulants}\label{Section:circulants}
%%%%%%%%%%%%%%%%%%%%%%%%%%%%%%%%%%%%%%%%%%%%%%%%%%%%%%%%%%%%%%%%%%%%%%%%%%%%%%%%%%%%

A matrix of the form
\begin{equation}%\label{circulant.matrix}
C=
\begin{pmatrix}
c_0    &c_1    &c_2   &\dots &c_{n-2}&c_{n-1}\\
c_{n-1}&c_0    &c_1   &\dots &c_{n-3}&c_{n-2}\\
c_{n-2}&c_{n-1}&c_0   &\dots &c_{n-4}&c_{n-3}\\
\vdots &\vdots &\vdots&\ddots&\vdots &\vdots \\
c_2    &c_3    &c_4   &\dots &c_0    &c_1    \\
c_1    &c_2    &c_3   &\dots &c_{n-1}&c_0    \\
\end{pmatrix}
\end{equation}
is called \textit{circulant}. The polynomial
\begin{equation*}
f(x)=c_0+c_1x+c_2x^2+\dots+c_{n-1}x^{n-1}
\end{equation*}
is called the \textit{associated polynomial} of the matrix $C$. It is very well-known~\cite{Davis} that the eigenvalues
$\lambda_j$, $j=1,\ldots,n$, of the matrix $C$ can be expressed as follows
\begin{equation}\label{circulant.eigenvalues}
\lambda_j=c_0+c_1\omega_{j-1}+c_2\omega_{j-1}^2+\dots+c_{n-1}\omega_{j-1}^{n-1}=f(\omega_{j-1}),\quad j=1,\ldots,n,
\end{equation}
where
\begin{equation*}
\omega_{k}=e^{\tfrac{2\pi i}{n}k},\quad k=0,1,\ldots,n-1,
\end{equation*}
are the $n^{th}$ roots of unity. The formula~\eqref{circulant.eigenvalues} shows that for
any set $\{\lambda_1,\ldots,\lambda_n\}$ of $n$ complex numbers, there exist at most $n!$
(number of permutations) different circulant matrices with the spectrum at this numbers
(counting multiplicities). At the same time, if we fix the arrangement of these numbers%\footnote{That is, we assign an index
%to each number.}
, for example, if we arrange the numbers $\lambda_j$ as follows
\begin{equation}\label{order.EVal}
|\lambda_1|\geqslant|\lambda_2|\geqslant\cdots\geqslant|\lambda_n|,
\end{equation}
then there exists exactly one circulant matrix whose spectrum $\{\lambda_1,\ldots,\lambda_n\}$ satisfies
the equations~\eqref{circulant.eigenvalues} and have the prescribed arrangement (e.g. satisfy the condition~\eqref{order.EVal}). This follows from the simple fact that the system~\eqref{circulant.eigenvalues}
is not singular, since its matrix
\begin{equation}\label{Vandermonde}
\begin{pmatrix}
1         &1         &\dots&1             &1             \\
\omega_  0&\omega_1  &\dots&\omega_{n-2}  &\omega_{n-1}  \\
\omega_0^2&\omega_1^2&\dots&\omega_{n-2}^2&\omega_{n-1}^2\\
\vdots    &\vdots    &\ddots&\vdots       &\vdots        \\
\omega_0^{n-2}&\omega_1^{n-2}&\dots&\omega_{n-2}^{n-2}&\omega_{n-1}^{n-2}\\
\omega_0^{n-1}&\omega_1^{n-1}&\dots&\omega_{n-2}^{n-1}&\omega_{n-1}^{n-1}\\
\end{pmatrix}
\end{equation}
has determinant equal to $n$. With a fixed arrangement of eigenvalues, to the eigenvalue $\lambda_j$ there corresponds
the following eigenvector
\begin{equation}\label{circulant.eigenvectors}
u_j=
\begin{pmatrix}
1\\
\omega_{j-1}\\
\omega_{j-1}^2\\
\vdots\\
\omega_{j-1}^{n-1}\\
\end{pmatrix}\ ,
\end{equation}
so that
$$
Cu_j=\lambda_ju_j,\qquad j=1,\ldots,n.
$$
Moreover,
\begin{equation}\label{orthogonality.eigenvectors}
\langle u_k,u_j\rangle=n\delta_{kj}, \qquad k,j=1,\ldots,n,
\end{equation}
where $\langle\cdot,\cdot\rangle$ is the inner product in $\mathbb{C}^n$, and $\delta_{k,j}$ is the Kronecker symbol.

Let us denote by $C_{n-1}$ the following $(n-1)\times(n-1)$ matrix constructed with first $n-1$ rows and columns
of the circulant $C$:
\begin{equation}\label{circulant.SUBmatrix}
C_{n-1}=
\begin{pmatrix}
c_0    &c_1    &c_2   &\dots &c_{n-3}&c_{n-2}\\
c_{n-1}&c_0    &c_1   &\dots &c_{n-4}&c_{n-3}\\
c_{n-2}&c_{n-1}&c_0   &\dots &c_{n-5}&c_{n-4}\\
\vdots &\vdots &\vdots&\ddots&\vdots &\vdots \\
c_3    &c_4    &c_5   &\dots &c_0    &c_1    \\
c_2    &c_3    &c_4   &\dots &c_{n-1}&c_0    \\
\end{pmatrix}
\end{equation}

Now we are in a position to establish one of the main results of the present work revealing an interesting and deep
property of circulant matrices.

\begin{theorem}\label{Theorem.circulant.derivative}
For a given complex circulant matrix $C$ with the monic characteristic polynomial~$p(z)$, the spectrum
of its submatrix $C_{n-1}$ coincides with the set of all critical points of $p(z)$ (counting multiplicities).
\end{theorem}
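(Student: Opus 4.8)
The plan is to compute the characteristic polynomial of $C_{n-1}$ directly from the resolvent of $C$. The elementary identity I would start from is that, for any $n\times n$ matrix $A$ with leading principal $(n-1)\times(n-1)$ submatrix $A_{n-1}$, the quantity $\det(zI_{n-1}-A_{n-1})$ is the $(n,n)$ minor of $zI_n-A$, which is the $(n,n)$ entry of the adjugate $\operatorname{adj}(zI_n-A)=\det(zI_n-A)\,(zI_n-A)^{-1}$. Hence, as an identity of polynomials in $z$,
\[
\det(zI_{n-1}-C_{n-1})=\det(zI_n-C)\cdot\bigl[(zI_n-C)^{-1}\bigr]_{nn}=p(z)\cdot\bigl[(zI_n-C)^{-1}\bigr]_{nn},
\]
the right-hand side being a genuine polynomial because the pole of the resolvent entry is cancelled by $p(z)$. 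This reduces everything to evaluating the bottom-right entry of the resolvent, which is precisely the bordering (one-rank perturbation) viewpoint specialised to deleting the last coordinate.

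Next I would diagonalise $C$ using the data already recorded in this section. Since the eigenvectors $u_1,\dots,u_n$ satisfy $\langle u_k,u_j\rangle=n\delta_{kj}$, the matrix $U$ with columns $u_j$ obeys $U^{*}U=nI$, so $U^{-1}=\tfrac1n U^{*}$ and $C=\tfrac1n U\,\Lambda\,U^{*}$ with $\Lambda=\operatorname{diag}(\lambda_1,\dots,\lambda_n)$. Consequently
\[
\bigl[(zI_n-C)^{-1}\bigr]_{nn}=\frac1n\sum_{j=1}^{n}\frac{|(u_j)_n|^2}{z-\lambda_j},
\]
where $(u_j)_n$ is the last coordinate of $u_j$. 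By the explicit form of the eigenvectors, $(u_j)_n=\omega_{j-1}^{\,n-1}$, which has modulus $1$, so every numerator $|(u_j)_n|^2$ equals $1$.

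Putting these together, the resolvent entry collapses to the logarithmic derivative $\tfrac1n\sum_{j=1}^n (z-\lambda_j)^{-1}=\tfrac1n\,p'(z)/p(z)$, whence
\[
\det(zI_{n-1}-C_{n-1})=p(z)\cdot\frac1n\,\frac{p'(z)}{p(z)}=\frac1n\,p'(z),
\]
a monic polynomial of degree $n-1$ whose roots are exactly the critical points of $p$, with the correct multiplicities.

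The step I would treat most carefully, and the one that makes the whole argument work, is the modulus-$1$ property of the last coordinates of the eigenvectors: it is exactly this feature of circulants that turns the weighted sum $\sum_j |(u_j)_n|^2/(z-\lambda_j)$ into the unweighted sum $\sum_j 1/(z-\lambda_j)$, i.e. into $p'/p$. For a general normal matrix the weights $|(u_j)_n|^2$ would differ and the submatrix spectrum need not consist of the critical points; the circulant structure is precisely what forces all weights to be equal, which is why circulants realise Malamud's differentiator in an explicit way. The remaining care is purely formal: verifying that the resolvent-submatrix identity is an identity of polynomials (hence valid at the eigenvalues as well, once the pole is cancelled) and checking the leading coefficient so that $\tfrac1n p'(z)$ is indeed monic of degree $n-1$.
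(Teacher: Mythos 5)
Your proof is correct, and it takes a genuinely cleaner route than the paper's, although both rest on the same underlying property of circulants. The paper argues via a rank-one projector: with $u=e_1$ and $P=I-u\otimes u$ it computes $\det(zI-PC)$ through the factorization $\det(zI-C)\det\bigl(I+HC(zI-C)^{-1}\bigr)=\tfrac1n z\,p'(z)$ and then identifies $\det(zI-PC)=z\det(zI-C_{n-1})$; since that computation is carried out under the assumptions that the roots are simple, nonzero and noncritical, the paper must then remove these hypotheses by a shift $z\mapsto z+\varepsilon$ and a continuity/approximation argument occupying the last two paragraphs of its proof. You instead use the adjugate identity $\det(zI_{n-1}-C_{n-1})=\bigl[\operatorname{adj}(zI_n-C)\bigr]_{nn}=p(z)\bigl[(zI_n-C)^{-1}\bigr]_{nn}$ together with the explicit unitary diagonalization $C=\tfrac1n U\Lambda U^{*}$, which \emph{every} circulant admits (the Fourier matrix diagonalizes it regardless of eigenvalue multiplicities or zero eigenvalues), so your argument needs no genericity assumptions and no limiting step: the polynomial identity $\det(zI_{n-1}-C_{n-1})=\tfrac1n p'(z)$ comes out at once for all circulants. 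The crucial ingredient is the same in both proofs, namely the unimodularity of the coordinates of the Fourier eigenvectors, which you use to force every residue weight $|(u_j)_n|^2$ to equal $1$ and collapse the weighted partial-fraction sum to $p'/(np)$, and which the paper uses in the guise of $HC(zI-C)^{-1}u=(R(z)-1)u$ with $u=\tfrac1n\sum_j u_j$. What the paper's formulation buys is direct compatibility with the general rank-one perturbation machinery of Barkovsky and Cheung--Ng that the authors cite and build on elsewhere; what yours buys is brevity and uniformity, eliminating the case analysis entirely, and it also pinpoints exactly why the statement fails for a general normal matrix (unequal weights $|(u_j)_n|^2$).
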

\begin{proof}
Let $\lambda_j$, $j=1,\ldots,n$, be the roots of the polynomial $p(z)$. Suppose first that
$$
p'(\lambda_j)\neq0\ \ \ \text{and}\ \ \ \lambda_j\neq0,\qquad j=1,\ldots,n,
$$
and construct the following rational function
$$
R(z)=\dfrac1n\dfrac{zp'(z)}{p(z)}=1+\dfrac{1}{n}\sum_{j=1}^n\dfrac{\lambda_j}{z-\lambda_j}.
$$

Let $u$ be the vector as follows
\begin{equation}\label{vector.u}
u\stackrel{def}{=}\dfrac1n\sum_{j=1}^nu_j=
\dfrac1n\sum_{j=1}^n
\begin{pmatrix}
1\\
\omega_{j-1}\\
\omega_{j-1}^2\\
\vdots\\
\omega_{j-1}^{n-1}\\
\end{pmatrix}=
\begin{pmatrix}
1\\
0\\
0\\
\vdots\\
0\\
\end{pmatrix}\ ,
\end{equation}
where $u_j$, $j=1,\ldots,n$, are the eigenvectors of the matrix $C$ defined in~\eqref{circulant.eigenvectors},
and let $H$ be the following one-rank matrix
\begin{equation*}\label{matrix.H}
H\stackrel{def}{=}u\otimes u=\begin{pmatrix}
1&0&\dots&0&0\\
0&0&\dots&0&0\\
\vdots&\vdots&\ddots&\vdots&\vdots\\
0&0&\dots&0&0\\
0&0&\dots&0&0\\
\end{pmatrix}\ ,
\end{equation*}
thus $\forall x\in\mathbb{C}^n$,
\begin{equation*}
Hx=\langle x,u\rangle u,
\end{equation*}
so that
$$
Hu_j=u,\quad j=1,\ldots,n,
$$
as it follows from~\eqref{vector.u} and~\eqref{orthogonality.eigenvectors} (or from the definition of the matrix $H$). Also
it is clear that $Hu=u$, so $H^2=H$, and the operator
$$
P=I-H
$$
is a projector to the subspace
$$
\{x\in\mathbb{C}^n\ :\ \langle x,u\rangle=0\}.
$$
as a difference of the identity operator and the projector $H$ to the subspace $\mathrm{span}\{u\}$.

We now claim that $zp'(z)=n\det(zI-PC)$. Indeed, since
$$
(zI-C)^{-1}u_j=\dfrac{u_j}{z-\lambda_j}, \quad j=1,\ldots,n,
$$
and
$$
HCu_j=\lambda_ju,
$$
one has for $z\neq\lambda_j$, $j=1,\ldots,n$,
$$
HC(zI-C)^{-1}u=\dfrac1n\sum_{j=1}^n\dfrac{HCu_j}{z-\lambda_j}=\dfrac1n\sum_{j=1}^n\dfrac{\lambda_j}{z-\lambda_j}u=
\left(R(z)-1\right)u.
$$
From this identity it is clear that the matrix $HC(zI-C)^{-1}$ has the form
$$
HC(zI-C)^{-1}=
\begin{pmatrix}
R(z)-1&\times&\dots&\times&\times\\
0&0&\dots&0&0\\
\vdots&\vdots&\ddots&\vdots&\vdots\\
0&0&\dots&0&0\\
0&0&\dots&0&0\\
\end{pmatrix}\ ,
$$
so the only (possibly) nonzero entries of this matrix lie on the first row. Thus, we have
$$
\det(zI-PC)=\det([zI-C]+HC)=\det(zI-C)\det\left(I+HC(zI-C)^{-1}\right)=p(z)(R(z)-1)=\dfrac{1}{n}zp'(z),
$$
so
$$
zp'(z)=n\det(zI-PC).
$$
On the other hand,
$$
n\det(zI-PC)=nz\det(zI-C_{n-1}),
$$
where $C_{n-1}$ is defined in~\eqref{circulant.SUBmatrix}. Therefore,
\begin{equation*}\label{circulant.SUBmatrix.det}
p'(z)=n\det(zI-C_{n-1}).
\end{equation*}

If $p(z)$ has simple roots one of which is equal to zero, the we consider a polynomial $\widehat{p}(z)=p(z-\varepsilon)$, $\varepsilon\neq0$,
so that $\widehat{p}'(z)=p'(z-\varepsilon)$. Then we find for $\widehat{p}(z)$ a circulant $\widehat{C}$ such that
$\widehat{p}=\det(zI-\widetilde{C})$ and $\widehat{p}'(z)=n\det(zI-\widehat{C}_{n-1})$. Changing variables $z\to z+\varepsilon$
gives us $p(z)=\det(zI-C)$, where $C=\widehat{C}-\varepsilon I$, and $p'(z)=n\det(zI-C_{n-1})$.

Finally, if $p(z)$ has multiple roots, then one can approximate $p(z)$ by a sequence of polynomials~$p_m(z)$ with simple roots,
so $p_m(z)\to p(z)$ and $p'_m(z)\to p'(z)$ as $m\to+\infty$ uniformly on compact sets. Clearly, in this case the corresponding
circulants $C^{(m)}$ such that $p_m(z)=\det(zI-C^{(m)})$ tend to a circulant matrix $C$ such that $p(z)=\det(zI-C)$ and $p'(z)=n\det(zI-C_{n-1})$, as required.
\end{proof}
\begin{remark}
In the proof of Theorem~\ref{Theorem.circulant.derivative}, we used the technique from the work~\cite{Cheung_Ng_2} by Cheung and Ng,
but, indeed, the fact proved in Theorem~\ref{Theorem.circulant.derivative} is a simple consequence of a more general fact
established in~\cite{Bark_thesis} and~\cite[Section 2]{Bark_Kushel_Tyaglov}.
\end{remark}

The circulant matrix $C$ is normal~\cite{Davis}, that is,
\begin{equation*}
CC^*=C^*C.
\end{equation*}
Here the matrix $C^*$ adjoint to $C$ is also a circulant. Its eigenvalues are $\overline{\lambda_j}$ with the same corresponding
eigenvectors $u_j$ defined in~\eqref{circulant.eigenvectors}. Let us denote $A\stackrel{def}{=}CC^*$. The matrix $A$ is a circulant
as well
\begin{equation}\label{circulant.A}
A=
\begin{pmatrix}
a_0    &a_1    &a_2   &\dots &a_{n-2}&a_{n-1}\\
a_{n-1}&a_0    &a_1   &\dots &a_{n-3}&a_{n-2}\\
a_{n-2}&a_{n-1}&a_0   &\dots &a_{n-4}&a_{n-3}\\
\vdots &\vdots &\vdots&\ddots&\vdots &\vdots \\
a_2    &a_3    &a_4   &\dots &a_0    &a_1    \\
a_1    &a_2    &a_3   &\dots &a_{n-1}&a_0    \\
\end{pmatrix}\ ,
\end{equation}
where
\begin{equation}\label{coeff.a_0}
a_0=\sum_{j=0}^{n-1}|c_j|^2
\end{equation}
and
\begin{equation}\label{coeff.a_k}
a_k=\sum_{j=0}^{k-1}c_j\overline{c}_{n-k+j}+\sum_{j=0}^{n-k-1}c_{k+j}\overline{c}_j.
\end{equation}
Here bar means the complex conjugation. Clearly,
\begin{equation}
a_k=\overline{a}_{n-k},\quad k=1,\ldots,n-1,\qquad\text{and}\qquad a_0\in\mathbb{R}.
\end{equation}
The eigenvalues of the matrix $A$ are $|\lambda_j|^2$, $j=1,\ldots,n$.

Generally speaking, the matrix $C_{n-1}$ is not a circulant and is not normal. However, it is possible to describe the case
when it is normal. The following theorem is a consequence of a theorem due to Ky Fan and G.\,Pall~\cite[Theorem~2]{Ky_Fan_Pall} which deals with arbitrary normal matrices and its principal submatrices.
\begin{theorem}\label{Theorem.normal.SUBmatrix}
The submatrix $C_{n-1}$ of a circulant $C$ is normal if, and only if, all the eigenvalues
of~$C$ lie on a straight line.
\end{theorem}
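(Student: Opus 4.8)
The plan is to prove both implications by viewing $C$ through its $2\times2$ block partition
$$
C=\begin{pmatrix}C_{n-1}&\mathbf{c}\\ \mathbf{r}&c_0\end{pmatrix},
$$
where $\mathbf{c}=(c_{n-1},c_{n-2},\ldots,c_1)^{T}$ is the last column and $\mathbf{r}=(c_1,c_2,\ldots,c_{n-1})$ is the last row of the circulant, each truncated to its first $n-1$ entries. Throughout I would use that $C$ itself is normal. The guiding idea is to reduce the normality of $C_{n-1}$ to a symmetry relation between the border vectors $\mathbf{c}$ and $\mathbf{r}$, and then, using the circulant structure, to translate that relation into the geometric statement that the $\lambda_j$ are collinear.

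First I would treat the easy implication. If all eigenvalues lie on a line $\lambda_j=a+bt_j$ with $t_j\in\mathbb{R}$ and $b\neq0$, then $M\eqbd b^{-1}(C-aI)$ is a circulant whose spectrum $\{t_j\}$ is real; being a normal matrix with real spectrum, $M$ is Hermitian. Its principal submatrix $M_{n-1}$ is then Hermitian as well, and since $C_{n-1}=aI+bM_{n-1}$, a direct expansion using $M_{n-1}^{*}=M_{n-1}$ shows that both $C_{n-1}C_{n-1}^{*}$ and $C_{n-1}^{*}C_{n-1}$ equal $|a|^2I+(a\bar b+\bar a b)M_{n-1}+|b|^2M_{n-1}^2$, so $C_{n-1}$ is normal.

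For the converse I would compare the $(1,1)$-blocks of $CC^{*}$ and $C^{*}C$. Normality of $C$ gives $C_{n-1}C_{n-1}^{*}+\mathbf{c}\mathbf{c}^{*}=C_{n-1}^{*}C_{n-1}+\mathbf{r}^{*}\mathbf{r}$, so once $C_{n-1}$ is assumed normal this collapses to $\mathbf{c}\mathbf{c}^{*}=\mathbf{r}^{*}\mathbf{r}$. Both sides are rank-one positive semidefinite outer products, and two such are equal precisely when their generating vectors agree up to a unimodular scalar; hence $\mathbf{c}=e^{i\psi}\mathbf{r}^{*}$ for some real $\psi$. Reading this entrywise against the explicit forms of $\mathbf{c}$ and $\mathbf{r}$ yields the coefficient symmetry $\bar c_k=e^{i\theta}c_{n-k}$ for $k=1,\ldots,n-1$. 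Choosing $b$ with $\bar b/b=e^{i\theta}$ and setting $M\eqbd b^{-1}(C-c_0I)$, this symmetry is exactly the condition $m_k=\bar m_{n-k}$ for $M$ to be a Hermitian circulant; its eigenvalues are therefore real, and so $\lambda_j=c_0+b\,t_j$ lie on a straight line.

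The two reductions are inverse to each other, the border condition $\mathbf{c}=e^{i\psi}\mathbf{r}^{*}$ being the specialization to circulants of the Ky Fan--Pall criterion for a codimension-one principal submatrix of a normal matrix to be normal, so the equivalence follows. I expect the main obstacle to lie in the converse direction, specifically in the passage from the purely algebraic border identity $\mathbf{c}\mathbf{c}^{*}=\mathbf{r}^{*}\mathbf{r}$ to the geometric collinearity of the spectrum: this is where the circulant structure is indispensable, since for a general normal matrix the same identity says nothing about the placement of the eigenvalues in the plane. Care is also needed to verify that a single global phase $\psi$ (equivalently a single $\theta$) serves all coefficients simultaneously, and to accommodate the degenerate case in which some $c_k$ vanish.
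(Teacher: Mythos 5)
Your proof is correct and follows essentially the same route as the paper: the forward direction by writing $C$ as an affine image $aI+bM$ of a Hermitian circulant $M$ whose principal submatrix is Hermitian, and the converse by equating $C_{n-1}C_{n-1}^{*}$ with $C_{n-1}^{*}C_{n-1}$ (the paper's matrices $B$ and $\widetilde{B}$) to extract the coefficient symmetry $c_{n-k}=e^{i\psi}\overline{c_k}$ and hence collinearity of the spectrum. Your only departure is presentational: you get that symmetry from the border-block identity $\mathbf{c}\mathbf{c}^{*}=\mathbf{r}^{*}\mathbf{r}$ plus the standard fact on equality of rank-one positive semidefinite outer products, whereas the paper writes out the entries of $B$ and $\widetilde{B}$ explicitly and chases the phases $\varphi_k+\varphi_{n-k}$ by hand --- your packaging is slightly cleaner and automatically absorbs the vanishing-coefficient case you flagged.
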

This theorem also follows from~\cite[Proposition 3.4]{Pereira}, since $C_{n-1}$ is a differentiator. However, we prove Theorem~\ref{Theorem.normal.SUBmatrix} here in order to reveal some specific properties of circulants and their submatrices.
To prove it, we need the following simple fact whose proof we provide here for completeness.
\begin{prepos}\label{Proposition.real.EV}
The circulant $C$ has all eigenvalues \textbf{real} if, and only if, it is self-adjoint or, equivalently, if its coefficients
satisfy the reflection property
\begin{equation}\label{reflection}
c_0\in\mathbb{R},\qquad c_{n-k}=\overline{c_k},\qquad k=1,\ldots,n.
\end{equation}
\end{prepos}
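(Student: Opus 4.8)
The plan is to prove the equivalence in Proposition~\ref{Proposition.real.EV} by exploiting the eigenvalue formula~\eqref{circulant.eigenvalues} together with the explicit description of the adjoint circulant. Recall from the discussion preceding the statement that $C^*$ is itself a circulant whose eigenvalues are $\overline{\lambda_j}$ with the \emph{same} eigenvectors $u_j$ from~\eqref{circulant.eigenvectors}. Since the $u_j$ form a complete orthogonal basis of $\mathbb{C}^n$ by~\eqref{orthogonality.eigenvectors}, the matrix $C$ is self-adjoint (i.e. $C=C^*$) if, and only if, $Cu_j=C^*u_j$ for every $j$, which by comparison of eigenvalues is equivalent to $\lambda_j=\overline{\lambda_j}$ for all $j$; that is, to all eigenvalues being real. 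This already yields one of the two asserted equivalences essentially for free.

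Next I would connect self-adjointness to the reflection property~\eqref{reflection} on the coefficients. The adjoint of the circulant $C$ built from $(c_0,c_1,\ldots,c_{n-1})$ is the circulant built from $(\overline{c_0},\overline{c_{n-1}},\overline{c_{n-2}},\ldots,\overline{c_1})$, because conjugate-transposing sends the entry $c_k$ (on the $k$-th superdiagonal, cyclically) to the entry $\overline{c_k}$ on the $k$-th subdiagonal, which in circulant form is the $(n-k)$-th cyclic position. Hence $C=C^*$ holds if, and only if, the associated sequences agree entrywise, i.e. $c_0=\overline{c_0}$ and $c_{n-k}=\overline{c_k}$ for $k=1,\ldots,n-1$, which is exactly~\eqref{reflection}. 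Equivalently one can phrase this spectrally: by~\eqref{circulant.eigenvalues} the eigenvalues are $\lambda_j=f(\omega_{j-1})$, and reality of all $\lambda_j$ combined with the nonsingularity of the Vandermonde-type matrix~\eqref{Vandermonde} (whose determinant is $n\neq0$) forces the reflection relations on the $c_k$, since $\overline{f(\omega)}=\overline{c_0}+\overline{c_1}\,\overline{\omega}+\cdots$ evaluated at roots of unity must match $f(\omega)$ at all $n$ nodes.

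In writing this up I would first establish the coefficient identification $C^*\leftrightarrow(\overline{c_0},\overline{c_{n-1}},\ldots,\overline{c_1})$ directly from the definition of the circulant~\eqref{circulant.matrix}, then observe that two circulants coincide if, and only if, their first rows coincide, giving the equivalence ``$C$ self-adjoint $\iff$~\eqref{reflection}'' by inspection. The equivalence ``$C$ self-adjoint $\iff$ all $\lambda_j$ real'' then follows from the shared orthogonal eigenbasis as above, or alternatively from the general spectral theorem since circulants are normal. I expect the only mildly delicate point to be bookkeeping the cyclic index $n-k$ correctly when transposing, so that the claimed reflection pattern emerges with the right indices; once the adjoint circulant's first row is pinned down, both equivalences are immediate and no genuine obstacle remains.
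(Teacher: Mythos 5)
Your proof is correct, and it reaches the conclusion by a slightly different mechanism than the paper's. For the nontrivial direction (real spectrum implies the reflection property), the paper conjugates the eigenvalue formula~\eqref{circulant.eigenvalues}, observes that the conjugated-and-reversed coefficient vector $(\overline{c_0},\overline{c_{n-1}},\ldots,\overline{c_1})$ then solves the very same nonsingular Vandermonde system~\eqref{Vandermonde} as $(c_0,c_1,\ldots,c_{n-1})$, and concludes $c_{n-k}=\overline{c_k}$ by uniqueness of the solution --- an argument carried out entirely at the level of coefficients. You instead work at the level of operators: reality of the $\lambda_j$ makes $C$ and $C^*$ agree on the orthogonal eigenbasis $u_1,\ldots,u_n$ (using the fact, stated in the paper just before the proposition, that $C^*$ is a circulant with $C^*u_j=\overline{\lambda_j}u_j$), hence $C=C^*$; self-adjointness is then identified with the reflection property by comparing first rows of the two circulants, which is exactly the step the paper dismisses as ``clear.'' The two mechanisms are equivalent in substance, since nonsingularity of~\eqref{Vandermonde} is precisely the statement that the $u_j$ span $\mathbb{C}^n$, and you even record the paper's coefficient-level variant as an alternative. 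Your operator-level organization has the minor advantage of delivering both equivalences (real spectrum $\Leftrightarrow$ self-adjoint $\Leftrightarrow$ reflection) in one clean chain, at the cost of leaning on the previously stated eigenstructure of $C^*$ --- which itself rests on the same index bookkeeping you carry out when pinning down the first row of $C^*$, so nothing circular or hidden is being used.
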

\begin{proof}
It is clear that $C$ is self-adjoint if, and only if, the reflection property~\eqref{reflection} holds. So, this property
implies the reality of the spectrum of $C$.

Conversely, if all the eigenvalues $\lambda_j$ of $C$ are real, then from~\eqref{circulant.eigenvalues} one has
\begin{equation*}
\overline{\lambda_j}=\overline{c_0}+\overline{c_{n-1}}\omega_{j-1}+\overline{c_{n-2}}\omega_{j-1}^2
+\dots+\overline{c_{1}}\omega_{j-1}^{n-1}=\lambda_j,\quad j=1,\ldots,n.
\end{equation*}
Since the matrix~\eqref{Vandermonde} of this system is non-singular, the solution is unique (since the arrangement of $\lambda_j$ is supposed to be fixed), so it must
satisfy the condition~\eqref{reflection}.
\end{proof}

If $C$ has only real eigenvalues, so it is self-adjoint, then its (principal) submatrix $C_{n-1}$ is also self-adjoint, and
has only real eigenvalues. The reality of the eigenvalues of $C_{n-1}$ also follows from Theorem~\ref{Theorem.circulant.derivative} but
it does not guarantee, generally speaking, that $C_{n-1}$ is self-adjoint.

\begin{proof}[Proof of Theorem~\ref{Theorem.normal.SUBmatrix}]
If all $\lambda_j$ lie on a straight line in the complex plane, then they have the form
$$
\lambda_j=\alpha\eta_j+\beta,\quad j=1,\ldots,n,
$$
where $\alpha,\beta\in\mathbb{C}$, and $\eta_j\in\mathbb{R}$. Therefore, the matrix $C$ can be represented
as follows
$$
C=\alpha\widetilde{C}+\beta I,
$$
where $\widetilde{C}$ is a circulant matrix whose eigenvalues are the numbers $\eta_j$. By Theorem~\ref{Proposition.real.EV},
$\widetilde{C}$ is self-adjoint. Consequently, $C_{n-1}$ has the form
\begin{equation}\label{normal.matrix.form}
C_{n-1}=\alpha\widetilde{C}_{n-1}+\beta I_{n-1},
\end{equation}
where $I_{n-1}$ is the $(n-1)\times(n-1)$ identity matrix, and $\widetilde{C}_{n-1}$ is self-adjoint as principal
submatrix of~$\widetilde{C}$. It is easy to check now that $C_{n-1}$ is normal.

Conversely, suppose that $C_{n-1}$ is normal. Let us denote
\begin{equation}\label{matrix.B}
B\stackrel{def}{=}C_{n-1}C^*_{n-1}.
\end{equation}
The matrix $B$ is self-adjoint, and its entries have the form
\begin{equation}\label{matrix.B.entries}
b_{lk}=a_{k-l}-c_{n-l}\overline{c}_{n-k},\qquad 1\leqslant l\leqslant k\leqslant n-1.
\end{equation}
At the same time, the entries $\widetilde{b}_{kl}$ of the self-adjoint matrix
\begin{equation}\label{matrix.B.wave}
\widetilde{B}\stackrel{def}{=}C^*_{n-1}C_{n-1}.
\end{equation}
have the form
\begin{equation}\label{matrix.B.wave.entries}
\widetilde{b}_{lk}=a_{k-l}-c_{k}\overline{c}_{l},\qquad 1\leqslant l\leqslant k\leqslant n-1.
\end{equation}
Thus, from normality of the matrix $C_{n-1}$ it follows that $B=\widetilde{B}$. This equality together with~\eqref{matrix.B.entries}--\eqref{matrix.B.wave.entries} implies
\begin{equation}\label{equalities.1}
|c_{k}|=|c_{n-k}|,\quad k=1,\ldots,n-1,
\end{equation}
and
\begin{equation}\label{equalities.2}
c_{n-l}\overline{c}_{n-k}=\overline{c}_{l}c_{k},\qquad 1\leqslant l< k\leqslant n-1.
\end{equation}
If we put $c_k=r_ke^{i\varphi_k}$, $r_k\in\mathbb{R}$, $\varphi_k\in[0,2\pi)$, $k=1,\ldots,n-1$, then the
identities~\eqref{equalities.1}--\eqref{equalities.2} imply
\begin{equation*}
\varphi_k+\varphi_{n-k}=\varphi_{l}+\varphi_{n-l},\qquad 1\leqslant l< k\leqslant n-1.
\end{equation*}
Denoting $2\theta\stackrel{def}{=}\varphi_k+\varphi_{n-k}$ for all $k=1,\ldots,n-1$, and
$$
\widehat{\varphi}_{k}\stackrel{def}{=}-\theta+\varphi_k,\qquad k=1,\ldots,n-1,
$$
we get
$$
\widehat{\varphi}_k+\widehat{\varphi}_{n-k}=-2\theta+\varphi_k+\varphi_{n-k}=0,\qquad k=1,\ldots,n-1,
$$
so the numbers $\widehat{c}_{k}\stackrel{def}{=}c_ke^{-i\theta}$ satisfy the following conditions
\begin{equation}\label{reflection.hat}
\widehat{c}_{k}=\overline{\widehat{c}_{n-k}},\qquad k=1,\ldots,n-1.
\end{equation}
This means that we can represent the matrix $C_{n-1}$ as follows
\begin{equation*}
C_{n-1}=e^{i\theta}\widehat{C}_{n-1}+c_0I_{n-1},
\end{equation*}
where the matrix
\begin{equation*}
\widehat{C}_{n-1}=
\begin{pmatrix}
0    &\widehat{c}_1    &\widehat{c}_2   &\dots &\widehat{c}_{n-3}&\widehat{c}_{n-2}\\
\widehat{c}_{n-1}&0    &\widehat{c}_1   &\dots &\widehat{c}_{n-4}&\widehat{c}_{n-3}\\
\widehat{c}_{n-2}&\widehat{c}_{n-1}&0   &\dots &\widehat{c}_{n-5}&\widehat{c}_{n-4}\\
\vdots &\vdots &\vdots&\ddots&\vdots &\vdots \\
\widehat{c}_3    &\widehat{c}_4    &\widehat{c}_5   &\dots &0    &\widehat{c}_1    \\
\widehat{c}_2    &\widehat{c}_3    &\widehat{c}_4   &\dots &\widehat{c}_{n-1}&0    \\
\end{pmatrix}
\end{equation*}
is self-adjoint by~\eqref{reflection.hat}. Thus the initial matrix $C$ has the form
$$
C=e^{i\theta}\widehat{C}+c_{0}I,
$$
where $\widehat{C}$ is self-adjoint. It is clear now that all eigenvalues of $C$ lie on a straight line.
\end{proof}

For completeness we prove here the following simple and curious fact. The usefulness of this statement is discussed in Section~\ref{Section:open.questions}.
\begin{prepos}\label{Proposition.abs.val}
The matrix $\sqrt{CC^*}$ is a (unique) circulant matrix whose eigenvalues are $|\lambda_j|$, $j=1,\ldots,n$ for a fixed arrangement of $|\lambda_j|$.
\end{prepos}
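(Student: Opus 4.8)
The plan is to exploit the fact that every circulant is diagonalized by one and the same unitary matrix built from the eigenvectors $u_j$ of~\eqref{circulant.eigenvectors}. Set $U\eqbd(u_1,\ldots,u_n)$, so that by the orthogonality relation~\eqref{orthogonality.eigenvectors} the matrix $\tfrac1{\sqrt n}U$ is unitary, and write $\Lambda\eqbd\operatorname{diag}(\lambda_1,\ldots,\lambda_n)$. Since $Cu_j=\lambda_ju_j$, we have $CU=U\Lambda$ and $U^{-1}=\tfrac1n U^*$, hence the spectral representation $C=\tfrac1n U\Lambda U^*$. The key structural fact I would first record is the converse: a matrix $M$ is a circulant if, and only if, it has the form $\tfrac1n U D U^*$ for some diagonal $D$ --- equivalently, if, and only if, $M$ commutes with the cyclic shift (the basic circulant). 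In particular, the circulants form a commutative $*$-algebra, closed under multiplication, adjoints, and limits.

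Next I would simply compute. From $C=\tfrac1n U\Lambda U^*$ one gets $C^*=\tfrac1n U\overline\Lambda U^*$, where the bar denotes entrywise conjugation, whence
\[
CC^*=\tfrac1n U\Lambda\overline\Lambda U^*=\tfrac1n U\,|\Lambda|^2\,U^*,\qquad |\Lambda|^2\eqbd\operatorname{diag}(|\lambda_1|^2,\ldots,|\lambda_n|^2).
\]
This both reconfirms that $A=CC^*$ is circulant with eigenvalues $|\lambda_j|^2$ and exhibits its unique positive semidefinite square root explicitly as
\[
\sqrt{CC^*}=\tfrac1n U\,|\Lambda|\,U^*,\qquad |\Lambda|\eqbd\operatorname{diag}(|\lambda_1|,\ldots,|\lambda_n|),
\]
the right-hand side being positive semidefinite and squaring to $CC^*$. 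Being of the form $\tfrac1n U D U^*$, the matrix $\sqrt{CC^*}$ is a circulant by the characterization above, and it shares the eigenvectors $u_j$ with $C$; hence its eigenvalue attached to $u_j$ (equivalently to the root of unity $\omega_{j-1}$) is exactly $|\lambda_j|$. Thus $\sqrt{CC^*}$ realizes the arrangement of the numbers $|\lambda_j|$ inherited from that of the $\lambda_j$.

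Finally, uniqueness follows from the nonsingularity of the Vandermonde matrix~\eqref{Vandermonde}: once it is prescribed which $|\lambda_j|$ is the eigenvalue associated with $\omega_{j-1}$ --- i.e. once the arrangement is fixed --- the associated polynomial, and hence the circulant itself, is determined uniquely by~\eqref{circulant.eigenvalues}.

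I expect the only genuinely delicate point to be the justification that the positive semidefinite square root stays inside the circulant algebra. The simultaneous-diagonalization identity above settles this directly; alternatively, one may note that $\sqrt{A}$ equals $g(A)$ for a polynomial $g$ agreeing with $\sqrt{t}$ on the finite nonnegative spectrum of $A$, and that $g(A)$ is circulant because circulants are closed under sums and products. Everything else is routine.
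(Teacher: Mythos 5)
Your proof is correct, and it rests on the same two pillars as the paper's: the fact that all circulants are simultaneously diagonalized by the Fourier eigenvectors~\eqref{circulant.eigenvectors}, and the uniqueness of the positive semidefinite square root of a positive semidefinite matrix \cite[Theorem~7.2.6]{Horn_Johnson}. The difference is that you run the argument in the opposite direction. The paper first builds the candidate \emph{as a circulant}: by the Vandermonde argument~\eqref{Vandermonde} there is a unique circulant $\widehat{C}$ with eigenvalues $|\lambda_j|$ (in the fixed arrangement), it is self-adjoint by Proposition~\ref{Proposition.real.EV} and hence positive semidefinite, one checks $\widehat{C}^2=CC^*$, and uniqueness of the square root yields $\widehat{C}=\sqrt{CC^*}$; circulant-ness is free by construction. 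You instead compute the square root explicitly, $\sqrt{CC^*}=\tfrac1n U|\Lambda|U^*$, and must then argue that this matrix \emph{is} a circulant, which requires the converse characterization (circulants are exactly the matrices of the form $\tfrac1n UDU^*$, equivalently the commutant of the cyclic shift) --- a standard fact that the paper never states, though it does follow from the paper's own Vandermonde existence argument; your fallback via functional calculus ($\sqrt{A}=g(A)$ for an interpolating polynomial $g$, and circulants form an algebra) closes that point cleanly, so there is no gap. What your packaging buys: the identity $\widehat{C}^2=CC^*$, which the paper justifies rather tersely (``the eigenvalues coincide up to their arrangement'' --- equality of spectra alone would not force equality of matrices; one needs the eigenvalues to match eigenvector-by-eigenvector), becomes an airtight two-line computation, since your formula exhibits both matrices as diagonal in the same basis, and it also makes explicit which arrangement of the $|\lambda_j|$ the matrix $\sqrt{CC^*}$ realizes, namely the one inherited from the arrangement of the $\lambda_j$ in $C$. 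What the paper's packaging buys: no converse characterization of circulants is ever needed.
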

\begin{proof}
As we mentioned above, there exists a unique circulant $\widehat{C}$ whose eigenvalues are $|\lambda_j|$ with a fixed arrangement, say, satisfying
the inequalities~\eqref{order.EVal}. By Proposition~\ref{Proposition.real.EV}, $\widehat{C}$ is self-adjoint, so it is positive semidefinite self-adjoint matrix.
It is clear that $\widehat{C}^2=CC^*$, since the eigenvalues of the circulants $\widehat{C}^2$ and $CC^*$ coincide up to their arrangement. The positive semidefinite square
root of the positive semidefinite self-adjoint matrix $CC^*$ is unique~\cite[Theorem~7.2.6]{Horn_Johnson}, therefore $\widehat{C}=\sqrt{CC^*}$, as required.
\end{proof}
%

%Finally, let us recall a few fact to be of use in the next Sections. The following theorem due to Schur~\cite{Schur}.
Finally, let us recall the following theorem due to Schur~\cite{Schur} which is to be of use in the next Section.
\begin{theorem}\label{Theorem.Schur}
If $T=(t_{kj})_{k,j=1}^n$ is an $n\times n$ matrix with eigenvalues $\mu_1$, $\mu_2$, \ldots, $\mu_n$, then
\begin{equation*}\label{Schur.inequalities}
\sum_{j=1}^{n}|\mu_j|^2\leqslant Tr(TT^*)=\sum_{k,j=1}^n|t_{kj}|^2,
\end{equation*}
where equality holds if, and only if, $A$ is normal.
\end{theorem}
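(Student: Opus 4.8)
The plan is to reduce to an upper triangular matrix via Schur's unitary triangularization theorem, which asserts that for any complex $T$ there is a unitary matrix $U$ with $U^*TU=R$ upper triangular and carrying the eigenvalues $\mu_1,\ldots,\mu_n$ on its diagonal. The key preliminary observation is that the quantity $\operatorname{Tr}(TT^*)=\sum_{k,j}|t_{kj}|^2$ is the squared Frobenius norm of $T$ and is invariant under unitary similarity, since $\operatorname{Tr}\big((U^*TU)(U^*TU)^*\big)=\operatorname{Tr}(U^*TT^*U)=\operatorname{Tr}(TT^*)$ by cyclicity of the trace together with $U^*U=I$.

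Having triangularized, I would compute the Frobenius norm of $R=(r_{kj})$ directly. Since $r_{kj}=0$ for $k>j$ and $r_{jj}=\mu_j$, one has
$$\operatorname{Tr}(RR^*)=\sum_{k\leqslant j}|r_{kj}|^2=\sum_{j=1}^n|\mu_j|^2+\sum_{k<j}|r_{kj}|^2.$$
Combining this with the unitary invariance of the Frobenius norm yields
$$\operatorname{Tr}(TT^*)=\sum_{j=1}^n|\mu_j|^2+\sum_{k<j}|r_{kj}|^2\geqslant\sum_{j=1}^n|\mu_j|^2,$$
which is precisely the asserted inequality.

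For the equality case, the displayed identity shows that equality holds if and only if $r_{kj}=0$ for all $k<j$, that is, if and only if the triangular factor $R$ is in fact diagonal; equivalently, if and only if $T$ is unitarily similar to a diagonal matrix. The final step is to recall that unitary diagonalizability is exactly normality: if $R$ is diagonal then $T=URU^*$ satisfies $TT^*=URR^*U^*=UR^*RU^*=T^*T$, and conversely normality of $T$ forces $R$ to be normal and upper triangular.

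The argument is entirely routine; the only point requiring a little care is the equality analysis, where one must verify that a normal upper triangular matrix is necessarily diagonal. This is the step I would spell out rather than invoke as folklore: comparing the $(1,1)$ entries of $RR^*$ and $R^*R$ gives $\sum_{j}|r_{1j}|^2=|r_{11}|^2$, forcing $r_{1j}=0$ for $j>1$, and iterating down the diagonal entrywise shows all strictly upper entries vanish, so $R$ is diagonal and $T$ is normal.
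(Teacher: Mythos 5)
Your proof is correct and complete, including the equality analysis where you verify that a normal upper triangular matrix must be diagonal. Note, however, that the paper does not prove this statement at all: it is quoted as a classical result with a citation to Schur's 1909 paper, so there is no ``paper proof'' to compare against. What you have written is essentially the standard argument (unitary triangularization plus unitary invariance of the Frobenius norm, i.e.\ $\operatorname{Tr}(TT^*)=\sum_j|\mu_j|^2+\sum_{k<j}|r_{kj}|^2$), which is the proof found in Schur's original paper and in standard references such as Horn and Johnson; it is exactly the right way to establish the theorem, and you also silently fix the paper's typo in the equality clause, where ``$A$ is normal'' should read ``$T$ is normal.''
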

%

%In Section~\ref{section:majorization} we will need a relation between eigenvalues and singular values of a matrix.
%Recall that singular values of a matrix $A$ are the eigenvalues of the matrix $\sqrt{AA^*}$. The following theorem
%can be found in~\cite[Theorem~3.3.13]{Horn_Johnson}.
%
%\begin{theorem}
%Let an $n\times n$ matrix $A$ have ordered singular values $\sigma_1(A)\geqslant\ldots\geqslant\sigma_n(A)\geqslant0$
%and eigenvalues $\{\lambda_1(A),\ldots,\lambda_n(A)\}$ ordered so that $|\lambda_1(A)|\geqslant\ldots\geqslant|\lambda_n(A)|\geqslant0$.
%Then
%
%\begin{equation}\label{sing.val.majorization}
%(\Phi(|\lambda_1(A)|),\ldots,\Phi(|\lambda_n(A)|))\prec_{w}(\Phi(\sqrt{\xi_1}),\ldots,\Phi(\sqrt{\xi_{n-1}})),
%\end{equation}
%
%for any increasing function $\Phi:[0,+\infty)\mapsto\mathbb{R}$ such that $\Phi\circ\exp$ is convex on $\mathbb{R}$.
%\end{theorem}
%

\setcounter{equation}{0}
%%%%%%%%%%%%%%%%%%%%%%%%%%%%%%%%%%%%%%%%%%%%%%%%%%%%%%%%%%%%%%%%%%%%%%%%%%%%%%%%%%%%
\section{Schoenberg's and de Bruin and Sharma's conjectures}\label{Section:Conjectures}
%%%%%%%%%%%%%%%%%%%%%%%%%%%%%%%%%%%%%%%%%%%%%%%%%%%%%%%%%%%%%%%%%%%%%%%%%%%%%%%%%%%%

In this Section we show that Schoenberg's conjecture is a simple consequence of Theorem~\ref{Theorem.Schur} and~\ref{Theorem.circulant.derivative}
as well as de Bruin and Sharma's conjecture which we generalize here.

\begin{theorem}[Pereira~\cite{Pereira}, Malamud~\cite{Malamud}]
Let $\lambda_1$, $\lambda_2$, \ldots, $\lambda_n$ be the roots of a polynomial $p$ of degree
$n\geqslant2$, and let $w_1$, $w_2$, \ldots, $w_{n-1}$ be the roots of the derivative $p'$. Then
\begin{equation*}
\sum_{k=1}^{n-1}|w_k|^2\leqslant\dfrac1{n^2}\left|\sum_{j=1}^n\lambda_j\right|^2+\dfrac{n-2}{n}\sum_{j=1}^n|\lambda_j|^2,
\end{equation*}
where equality holds if, and only if, all $\lambda_j$ lie on a straight line.
\end{theorem}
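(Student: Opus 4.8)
The plan is to realize $p$ as the monic characteristic polynomial of a circulant $C$ of the form~\eqref{circulant.matrix}, whose eigenvalues are the roots $\lambda_j$ given by~\eqref{circulant.eigenvalues}, and then to exploit the three facts already at hand. By Theorem~\ref{Theorem.circulant.derivative} the eigenvalues of the submatrix $C_{n-1}$ are exactly the critical points $w_1,\dots,w_{n-1}$, so it suffices to estimate $\sum_{k=1}^{n-1}|w_k|^2$ through the entries of $C_{n-1}$ and to control the equality case via the normality criterion of Theorem~\ref{Theorem.normal.SUBmatrix}.

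First I would apply Schur's inequality (Theorem~\ref{Theorem.Schur}) to $C_{n-1}$, obtaining
$$\sum_{k=1}^{n-1}|w_k|^2\leqslant\operatorname{Tr}(C_{n-1}C_{n-1}^*)=\sum_{l,k=1}^{n-1}\bigl|(C_{n-1})_{lk}\bigr|^2,$$
with equality precisely when $C_{n-1}$ is normal. The core computation is to evaluate this sum of squared moduli. Since each coefficient $c_k$ occurs exactly once in every row and every column of the full circulant $C$, the total sum of squared moduli over $C$ equals $n\,a_0$, where $a_0=\sum_{j=0}^{n-1}|c_j|^2$ as in~\eqref{coeff.a_0}. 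Passing from $C$ to $C_{n-1}$ deletes the last row and the last column, each of which contributes $a_0$ to the sum of squared moduli, while the corner entry $c_0$ gets subtracted twice and must be added back once. Hence
$$\sum_{l,k=1}^{n-1}\bigl|(C_{n-1})_{lk}\bigr|^2=(n-2)\,a_0+|c_0|^2.$$

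It then remains to rewrite $a_0$ and $c_0$ in terms of the $\lambda_j$. Taking traces of $A=CC^*$, whose eigenvalues are $|\lambda_j|^2$, gives $n\,a_0=\sum_{j=1}^n|\lambda_j|^2$; and summing the eigenvalue formula~\eqref{circulant.eigenvalues} over $j$, the geometric sums of the $n$th roots of unity collapse to $n\,c_0=\sum_{j=1}^n\lambda_j$. Substituting these two identities yields exactly
$$\sum_{k=1}^{n-1}|w_k|^2\leqslant\dfrac{n-2}{n}\sum_{j=1}^n|\lambda_j|^2+\dfrac1{n^2}\left|\sum_{j=1}^n\lambda_j\right|^2.$$
The equality case is then read off from the single source of slack: Schur's inequality is an equality if and only if $C_{n-1}$ is normal, which by Theorem~\ref{Theorem.normal.SUBmatrix} happens if and only if all eigenvalues $\lambda_j$ of $C$ lie on a straight line.

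I expect no genuine obstacle: the argument is essentially a trace computation riding on Theorems~\ref{Theorem.circulant.derivative}, \ref{Theorem.Schur}, and~\ref{Theorem.normal.SUBmatrix}. The only point requiring a little care is the bookkeeping of how deleting one row and one column changes the sum of squared entries, together with the two elementary identities $n\,a_0=\sum_{j}|\lambda_j|^2$ and $n\,c_0=\sum_{j}\lambda_j$, which are precisely what make the two terms of the conjectured bound appear on the nose.
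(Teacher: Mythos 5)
Your proposal is correct and follows essentially the same route as the paper: realize $p$ via a circulant $C$, use Theorem~\ref{Theorem.circulant.derivative} to identify the spectrum of $C_{n-1}$ with the critical points, apply Schur's inequality (Theorem~\ref{Theorem.Schur}) and compute $\operatorname{Tr}(C_{n-1}C_{n-1}^*)=(n-2)a_0+|c_0|^2$, then substitute $na_0=\sum_j|\lambda_j|^2$ and $nc_0=\sum_j\lambda_j$. The only (welcome) difference is that you spell out the equality case explicitly through Theorem~\ref{Theorem.normal.SUBmatrix}, which the paper leaves implicit.
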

\begin{proof}
Let $C$ be a circulant matrix whose eigenvalues are $\lambda_j$, $j=1,\ldots,n$. From Theorems~\ref{Theorem.circulant.derivative} and~\ref{Theorem.Schur}, we have
\begin{equation}\label{sum.square.deriv.roots}
\sum_{k=1}^{n-1}|w_k|^2\leqslant Tr(C_{n-1}C_{n-1}^*)=(n-1)|c_0|^2+(n-2)\sum_{k=1}^{n-1}|c_k|^2=|c_0|^2+(n-2)\sum_{k=0}^{n-1}|c_k|^2.
\end{equation}
On the other hand, by normality of the circulant $C$, one has
\begin{equation}\label{sum.c_k}
\sum_{j=1}^{n}|\lambda_j|^2=n\sum_{k=0}^n|c_k|^2.
\end{equation}
Moreover,
\begin{equation}\label{c_0}
c_0=\dfrac1n\sum_{j=1}^{n}\lambda_j.
\end{equation}
Now~\eqref{sum.square.deriv.roots}--\eqref{c_0} together with Theorem~\ref{Theorem.Schur} imply the theorem.
\end{proof}

We are now in a position to prove the full generalization of de Bruin and Sharma's conjecture.

\vspace{2mm}

\noindent\textbf{Theorem 1.1.}\textit{
Let $\lambda_1$, $\lambda_2$, \ldots, $\lambda_n$ be the roots of a polynomial $p$ of degree
$n\geqslant2$, and let $w_1$, $w_2$, \ldots, $w_{n-1}$ be the roots of its derivative $p'$. Then
\begin{equation}\label{general.shaurma}
\begin{array}{c}
\displaystyle\sum_{k=1}^{n-1}|w_k|^4\leqslant\dfrac{n-6}{n}\sum_{j=1}^n|\lambda_j|^4+\dfrac{1}{n^2}\left(\sum_{j=1}^n|\lambda_j|^2\right)^2+
\dfrac1{n^2}\left|\sum_{j=1}^n\lambda_j^2-\dfrac1{n^2}\left(\sum_{j=1}^n\lambda_j\right)^2\right|^2+\\
\\
\displaystyle+\dfrac{2}{n}\sum_{j=1}^n|\lambda_j|^2\left|\lambda_j+\dfrac1n\sum_{j=1}^n\lambda_j\right|^2-\dfrac4{n^3}\sum_{j=1}^n|\lambda_j|^2\left|\sum_{j=1}^n\lambda_j\right|^2,
\end{array}
\end{equation}
where equality holds if, and only if, all $\lambda_j$ lie on a straight line.
}
\begin{proof}
Since the eigenvalues of the matrix $C_{n-1}^2$ are $w_k^2$, $k=1,\ldots,n-1$, from Theorem~\ref{Theorem.Schur}
we have
\begin{equation*}\label{shaurma.estimate}
\sum_{k=1}^{n-1}|w_k|^4\leqslant Tr\left(C_{n-1}C_{n-1}C_{n-1}^*C_{n-1}^*\right)=Tr(C_{n-1}C_{n-1}^*C_{n-1}^*C_{n-1})=Tr(B\widetilde{B}),
\end{equation*}
where $B$ and $\widetilde{B}$ are defined in~\eqref{matrix.B}--\eqref{matrix.B.entries} and
in~\eqref{matrix.B.wave}--\eqref{matrix.B.wave.entries}, respectively. To calculate $Tr(B\widetilde{B})$, let us note first that
\begin{equation*}
B=A_{n-1}-D\qquad\text{and}\qquad \widetilde{B}=A_{n-1}-\widetilde{D},
\end{equation*}
where $A_{n-1}$ is the $(n-1)^{th}$ leading principal submatrix of the circulant $A$ defined in~\eqref{circulant.A}, while
\begin{equation*}\label{matrix.D}
D=\begin{pmatrix}
|c_{n-1}|^2    &c_{n-1}\overline{c}_{n-2}    &c_{n-1}\overline{c}_{n-3}   &\dots &c_{n-1}\overline{c}_{2}&c_{n-1}\overline{c}_1\\
c_{n-2}\overline{c}_{n-1}&|c_{n-2}|^2    &c_{n-2}\overline{c}_{n-3}   &\dots &c_{n-2}\overline{c}_2&c_{n-2}\overline{c}_1\\
c_{n-3}\overline{c}_{n-1}&c_{n-3}\overline{c}_{n-2}&|c_{n-3}|^2   &\dots &c_{n-3}\overline{c}_2&c_{n-3}\overline{c}_1\\
\vdots &\vdots &\vdots&\ddots&\vdots &\vdots \\
c_2\overline{c}_{n-1}    &c_2\overline{c}_{n-2}    &c_2\overline{c}_{n-3}   &\dots &|c_2|^2    &c_2\overline{c}_{1}    \\
c_1\overline{c}_{n-1}    &c_1\overline{c}_{n-2}    &c_1\overline{c}_{n-3}   &\dots &c_{1}\overline{c}_{2}&|c_1|^2    \\
\end{pmatrix}
\end{equation*}
and
\begin{equation*}\label{matrix.D.wave}
\widetilde{D}=
\begin{pmatrix}
|c_1|^2    &c_2\overline{c}_{1}    &c_3\overline{c}_{1}   &\dots &c_{n-2}\overline{c}_{1}&c_{n-1}\overline{c}_{1}\\
c_{1}\overline{c}_{2}&|c_2|^2    &c_3\overline{c}_{2}   &\dots &c_{n-2}\overline{c}_{2}&c_{n-1}\overline{c}_{2}\\
c_{1}\overline{c}_{3}&c_{2}\overline{c}_{3}&|c_3|^2   &\dots &c_{n-2}\overline{c}_{3}&c_{n-1}\overline{c}_{3}\\
\vdots &\vdots &\vdots&\ddots&\vdots &\vdots \\
c_1\overline{c}_{n-2}    &c_2\overline{c}_{n-2}    &c_3\overline{c}_{n-2}   &\dots &|c_{n-2}|^2    &c_{n-1}\overline{c}_{n-2}    \\
c_1\overline{c}_{n-1}    &c_2\overline{c}_{n-1}    &c_3\overline{c}_{n-1}   &\dots &c_{n-2}\overline{c}_{n-1}&|c_{n-1}|^2    \\
\end{pmatrix}
\end{equation*}
Since $A$ is self-adjoint, $A_{n-1}$ is self-adjoint, as well. Thus we have
\begin{equation*}
Tr(B\widetilde{B})=Tr(A_{n-1}^2)+Tr(D\widetilde{D})-Tr(A_{n-1}\widetilde{D})-Tr(A_{n-1}D).
\end{equation*}
It is easy to see that
\begin{equation*}
Tr(A_{n-1}^2)=a_0^2+(n-2)\sum_{k=0}^{n-1}|a_k|^2=a_0^2+(n-2)e_0,
\end{equation*}
where
$$
e_0=\sum_{k=0}^{n-1}|a_k|^2
$$
is the diagonal entry of the circulant matrix
\begin{equation*}\label{circulant.E}
E\stackrel{def}{=}AA^*=
\begin{pmatrix}
e_0    &e_1    &e_2   &\dots &e_{n-2}&e_{n-1}\\
e_{n-1}&e_0    &e_1   &\dots &e_{n-3}&e_{n-2}\\
e_{n-2}&e_{n-1}&e_0   &\dots &e_{n-4}&e_{n-3}\\
\vdots &\vdots &\vdots&\ddots&\vdots &\vdots \\
e_2    &e_3    &e_4   &\dots &e_0    &e_1    \\
e_1    &e_2    &e_3   &\dots &e_{n-1}&e_0    \\
\end{pmatrix}\ ,
\end{equation*}
whose eigenvalues are $|\lambda_j|^4$, $j=1,\ldots,n$, so
\begin{equation}\label{e_0}
e_0=\dfrac1n\sum_{j=1}^n|\lambda_j|^4.
\end{equation}

It is clear that
\begin{equation*}
Tr(D\widetilde{D})=\left|\sum_{k=1}^{n-1}c_kc_{n-k}\right|^2.
\end{equation*}

Furthermore, for $Tr(A_{n-1}D)$ we have by~\eqref{coeff.a_0}--\eqref{coeff.a_k}
\begin{equation*}
\begin{array}{c}
\displaystyle Tr(A_{n-1}D)=a_0\sum_{k=1}^{n-1}|c_{n-k}|^2+\sum_{l=2}^{n-1}\overline{c}_{n-l}\sum_{k=1}^{l-1}c_{n-k}a_{n-l+k}+\sum_{l=1}^{n-2}\overline{c}_{n-l}\sum_{k=l+1}^{n-1}c_{n-k}a_{k-l}=\\
\\
\displaystyle =a_0(a_0-|c_0|^2)+\sum_{l=2}^{n-1}\sum_{k=1}^{l-1}\overline{c}_{n-l}c_{n-l+k}a_{n-k}+\sum_{l=1}^{n-2}\sum_{k=1}^{n-l-1}\overline{c}_{n-l}c_{n-k-l}a_{k}=\\
\\
\displaystyle =a_0(a_0-|c_0|^2)+\sum_{k=2}^{n-1}a_{k}\sum_{l=n-k+1}^{n-1}\overline{c}_{n-l}c_{2n-l-k}+\sum_{k=1}^{n-2}a_{k}\sum_{l=1}^{n-k-1}\overline{c}_{n-l}c_{n-k-l}=\\
\\
\displaystyle =a_0(a_0-|c_0|^2)+\sum_{k=1}^{n-1}a_{k}\left[\sum_{j=0}^{k-1}\overline{c}_{j}c_{n-k+j}+\sum_{j=1}^{n-k-1}\overline{c}_{j+k}c_{j}\right]-c_0\sum_{k=1}^{n-1}a_{k}\overline{c}_k-
\overline{c}_0\sum_{k=1}^{n-1}a_{k}c_{n-k}=\\
\\
\displaystyle = a_0^2+\sum_{k=1}^{n-1}|a_{k}|^2-a_0|c_0|^2-\sum_{k=1}^{n-1}a_{k}\left[c_0\overline{c}_k+\overline{c}_0c_{n-k}\right]=\\
\\
\displaystyle = e_0-a_0|c_0|^2-\sum_{k=1}^{n-1}a_{k}\left[c_0\overline{c}_k+\overline{c}_0c_{n-k}\right].
\end{array}
\end{equation*}

Analogously, we have
\begin{equation*}
Tr(A_{n-1}\widetilde{D})= e_0-a_0|c_0|^2-\sum_{k=1}^{n-1}a_{k}\left[c_0\overline{c}_k+\overline{c}_0c_{n-k}\right],
\end{equation*}
so
\begin{equation}\label{trace.BB}
Tr(B\widetilde{B})= a_0^2+(n-4)e_0+\left|\sum_{k=1}^{n-1}c_kc_{n-k}\right|^2+2a_0|c_0|^2+2\sum_{k=1}^{n-1}a_{k}\left[c_0\overline{c}_k+\overline{c}_0c_{n-k}\right],
\end{equation}

On another hand, from~\eqref{circulant.eigenvalues} we obtain
\begin{equation}\label{coeff.c_k}
c_k=\dfrac{1}{n}\sum_{j=1}^n\omega_{j-1}^{-k}\lambda_j,\qquad k=1,\ldots,n-1,
\end{equation}
so
\begin{equation*}
\sum_{k=1}^{n-1}c_kc_{n-k}=\dfrac{1}{n^2}\sum_{j=1}^n\sum_{l=1}^n\lambda_j\lambda_l\sum_{k=1}^{n-1}\omega_{j-1}^k\omega_{l-1}^{-k}=
\dfrac{n-1}{n^2}\sum_{j=1}^n\lambda_j^2-\dfrac2n\sum_{1\leqslant l<j\leqslant n}\lambda_j\lambda_l=\dfrac1n\sum_{j=1}^n\lambda_j^2-\dfrac1{n^2}\left(\sum_{j=1}^n\lambda_j\right)^2.
\end{equation*}
Therefore,
\begin{equation}\label{formula.1}
\left|\sum_{k=1}^{n-1}c_kc_{n-k}\right|^2=\left|\dfrac1n\sum_{j=1}^n\lambda_j^2-\dfrac1{n^2}\left(\sum_{j=1}^n\lambda_j\right)^2\right|^2.
\end{equation}
Recall that the eigenvalues of the matrix $A$ defined in~\eqref{circulant.A} are $|\lambda_j|^2$, $j=1,\ldots,n$, so analogously to~\eqref{circulant.eigenvalues},
one has
\begin{equation}\label{circulant.A.eigenvalues}
|\lambda_j|^2=a_0+a_1\omega_{j-1}+a_2\omega_{j-1}^2+\dots+a_{n-1}\omega_{j-1}^{n-1},\quad j=1,\ldots,n.
\end{equation}

Finally, from~\eqref{coeff.c_k} and~\eqref{circulant.A.eigenvalues} we obtain the following
\begin{equation}\label{formula.2}
\begin{array}{c}
\displaystyle a_0|c_0|^2+\sum_{k=1}^{n-1}a_{k}\left[c_0\overline{c}_k+\overline{c}_0c_{n-k}\right]= -a_0|c_0|^2+
\dfrac1{n}\sum_{k=0}^{n-1}a_k\sum_{j=1}^{n}\omega_{j-1}^k\left[\overline{\lambda}_jc_0+\lambda_j\overline{c_0}\right]=\\
\\
\displaystyle=-a_0|c_0|^2+\dfrac1{n}\sum_{j=1}^{n}|\lambda_j|^2\left[\overline{\lambda}_jc_0+\lambda_j\overline{c_0}\right]=
-a_0|c_0|^2+\dfrac1{n}\sum_{j=1}^{n}|\lambda_j|^2\left[|\lambda_j+c_0|^2-|c_0|^2-|\lambda_j|^2\right]=\\
\\
\displaystyle= \dfrac1{n}\sum_{j=1}^{n}|\lambda_j|^2|\lambda_j+c_0|^2-2a_0|c_0|^2-e_0.
\end{array}
\end{equation}
Now the inequality~\eqref{general.shaurma} follows from~\eqref{trace.BB},~\eqref{formula.1},~\eqref{formula.2}, and from~\eqref{coeff.a_0},~\eqref{c_0},~\eqref{e_0}.
\end{proof}

\setcounter{equation}{0}
%%%%%%%%%%%%%%%%%%%%%%%%%%%%%%%%%%%%%%%%%%%%%%%%%%%%%%%%%%%%%%%%%%%%%%%%%%%%%%%%%%%%
\section{Majorization of critical points of polynomials}\label{section:majorization}
%%%%%%%%%%%%%%%%%%%%%%%%%%%%%%%%%%%%%%%%%%%%%%%%%%%%%%%%%%%%%%%%%%%%%%%%%%%%%%%%%%%%

Theorem~\ref{Theorem.circulant.derivative} allows also to obtain some facts on majorization of critical points of polynomials or simplify their proofs. Before we show this, we need to present the definition of majorization.

For any vector $\mathbf{x}=(x_1,\ldots,x_n)$, denote by $(x_{[1]},\ldots,x_{[n]})$ the following rearrangement of the components of $\mathbf{x}$
$$
x_{[1]}\geqslant x_{[2]}\geqslant\cdots\geqslant x_{[n]}.
$$

\begin{definition}\label{def.majorization}
For any two vectors $\mathbf{a}=(a_1,\ldots,a_n)$ and $\mathbf{b}=(b_1,\ldots,b_n)$ from $\mathbb{R}^n$,
we say that $\mathbf{b}$ weakly majorizes $\mathbf{a}$, and denote this as $\mathbf{a}\prec_{w}\mathbf{b}$
if
$$
\sum_{j=1}^ma_{[j]}\leqslant\sum_{j=1}^mb_{[j]}, \qquad m=1,\ldots,n.
$$
Moreover, $\mathbf{a}$ is said to be strongly majorized by $\mathbf{b}$ if, in addition,
for $m=n$, one has
$$
\sum_{j=1}^na_{[j]}=\sum_{j=1}^nb_{[j]}.
$$
\end{definition}

The following fact was established in~\cite{Ky_Fan}.
\begin{theorem}[Ky Fan]
Let $M$ be a complex $n\times n$ matrix, and $\widehat{M}=\frac12(M+M^*)$. Then
$$
(\Re\lambda_1(M),\ldots,\Re\lambda_n(M))\prec_{w}(\lambda_1(\widetilde{M}),\ldots,\lambda_n(\widehat{M})),
$$
where $\lambda_j(M)$ and  $\lambda_j(\Re M)$, $j=1,\ldots,n$, are eigenvalues of the matrices
$M$ and $\widehat{M}$, respectively.
\end{theorem}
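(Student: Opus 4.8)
The plan is to reduce the statement, by a unitary change of basis, to the claim that the diagonal of a Hermitian matrix is weakly majorized by its spectrum, and then to invoke the Ky Fan variational principle. First I would apply Schur's unitary triangularization theorem to $M$: there is a unitary matrix $U$ and an upper triangular matrix $T=U^*MU$ whose diagonal entries are the eigenvalues $\lambda_j(M)$. Schur's theorem allows me to prescribe the order in which these diagonal entries occur, so I arrange them so that $\Re\lambda_1(M)\geqslant\Re\lambda_2(M)\geqslant\cdots\geqslant\Re\lambda_n(M)$. Then $U^*\widehat{M}U=\tfrac12(T+T^*)=:\widehat{T}$ is Hermitian, so its eigenvalues are real and coincide with those of $\widehat{M}$; moreover the diagonal entries of $\widehat{T}$ are $\tfrac12\bigl(\lambda_j(M)+\overline{\lambda_j(M)}\bigr)=\Re\lambda_j(M)$, listed in decreasing order. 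This reduces the problem to a purely Hermitian statement about $\widehat{T}$.

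Second, for the reduced statement I would use the Ky Fan maximum principle: for a Hermitian matrix $H$ with eigenvalues $\mu_1\geqslant\cdots\geqslant\mu_n$, the sum $\sum_{j=1}^m\mu_j$ of the $m$ largest eigenvalues equals the maximum of $\operatorname{tr}(V^*HV)$ over all $V\in\mathbb{C}^{n\times m}$ with $V^*V=I_m$. Applying this to $H=\widehat{T}$ and choosing $V=[\,e_1\ \cdots\ e_m\,]$, the matrix of the first $m$ coordinate vectors, gives $\operatorname{tr}(V^*\widehat{T}V)=\sum_{j=1}^m\widehat{T}_{jj}=\sum_{j=1}^m\Re\lambda_j(M)$, whence $\sum_{j=1}^m\Re\lambda_j(M)\leqslant\sum_{j=1}^m\mu_j$ for each $m=1,\ldots,n$, where $\mu_1\geqslant\cdots\geqslant\mu_n$ denote the (real) eigenvalues of $\widehat{M}$. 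Since the real parts $\Re\lambda_j(M)$ were arranged in decreasing order, the left-hand side is exactly the sum of the $m$ largest of them, so this chain of inequalities is precisely the weak majorization asserted, in the sense of Definition~\ref{def.majorization}.

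The main obstacle, and the one nontrivial ingredient, is the Ky Fan maximum principle (equivalently, the relevant half of the Schur--Horn theorem, namely that the diagonal of a Hermitian matrix is majorized by its eigenvalues); everything else is the routine bookkeeping of orderings needed to match the definition of $\prec_{w}$. If a self-contained argument is preferred, I would derive the inequality $\sum_{j=1}^m\widehat{T}_{jj}\leqslant\sum_{j=1}^m\mu_j$ from the Courant--Fischer min--max characterization of eigenvalues, or else observe that the vector of diagonal entries of $\widehat{T}$ is the image of its spectrum under the doubly stochastic matrix $\bigl(|w_{ij}|^2\bigr)$, where $W$ unitarily diagonalizes $\widehat{T}$, and then apply Birkhoff's theorem together with the fact that partial sums of the largest entries are preserved as an inequality under doubly stochastic maps.
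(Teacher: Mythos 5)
Your proof is correct, but there is nothing in the paper to compare it against: the paper states this theorem as an established fact of Ky Fan, citing his 1950 paper, and then uses it as a black box to derive the majorization \eqref{Pereira-Kastorkis}; no proof is given. What you have written is the standard self-contained argument for the cited result, and all of its steps check out: Schur unitary triangularization (with the eigenvalues placed on the diagonal in decreasing order of real part) turns $\widehat{M}$ into the Hermitian matrix $\widehat{T}=\tfrac12(T+T^*)$, whose diagonal is exactly $\bigl(\Re\lambda_1(M),\ldots,\Re\lambda_n(M)\bigr)$ and whose spectrum is that of $\widehat{M}$; then the Ky Fan maximum principle (or your alternative via the doubly stochastic matrix $\bigl(|w_{ij}|^2\bigr)$ and Birkhoff's theorem) gives the weak majorization of the diagonal by the spectrum, which is precisely the statement in the sense of Definition~\ref{def.majorization}. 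Two remarks. First, the ``Ky Fan maximum principle'' you invoke and the ``Ky Fan theorem'' being proved are distinct results (the maximum principle is the 1949 predecessor of the 1950 paper cited in the bibliography), so there is no circularity, but in a write-up you should state and cite the maximum principle precisely, or else fall back on your Schur--Horn/doubly-stochastic variant, which is fully elementary. Second, your argument actually yields \emph{strong} majorization, since $\operatorname{tr}\widehat{T}=\sum_{j}\Re\lambda_j(M)=\sum_{j}\lambda_j(\widehat{M})$ forces equality of the full sums; the theorem as stated only claims the weaker conclusion.
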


Now if $p(z)=\det(zI-C)=\prod_{j=1}^n(z-\lambda_j)$ for some circulant matrix, then numbers
$\Re \lambda_j$, $j=1,\ldots,n$, are the eigenvalues of the matrix $H=\frac12(C+C^*)$ which is a circulant as well.
By Theorem~\ref{Theorem.circulant.derivative}, the eigenvalues $\xi_k$, $k=1,\ldots,n-1$, of the matrix $H_{n-1}=\frac12(C_{n-1}+C^*_{n-1})$
are the critical points of the polynomial $\displaystyle\prod_{j=1}^n(z-\Re\lambda_j)$, and by Ky Fan's theorem one has
\begin{equation}\label{Pereira-Kastorkis}
(\Re w_1,\ldots,\Re w_{n-1})\prec_{w}(\xi_1,\ldots,\xi_{n-1}).
\end{equation}

The inequalities~\eqref{Pereira-Kastorkis} were conjectured by Katsoprinakis~\cite{Katsoprinakis} and
proved by Pereira in~\cite{Pereira}. In fact, this proof is similar to the original proof of Pereira. The
only difference is that we have an explicit differentiator~$C_{n-1}$ which allows us to clarify some ideas.

The following theorem was established in~\cite[Theorem 2.1]{Cheung_Ng_1}.
\begin{theorem}\label{Theorem.Patrick}
Let $p$ be a polynomial of degree $n\geqslant2$ with zeroes $\lambda_j$, $j=1,\ldots,n$, and
critical points $w_k$, $k=1,\ldots,n-1$. Suppose additionally that $\lambda_n=0$. If $q$ be a polynomial with zeroes $|\lambda_j|$, $j=1,\ldots,n$, and
critical points $\xi_k$, $k=1,\ldots,n-1$, then
\begin{equation}\label{Cheung_Ng.majorization}
(\Phi(|w_1|),\ldots,\Phi(|w_{n-1}|))\prec_{w}(\Phi(\xi_1),\ldots,\Phi(\xi_{n-1})),
\end{equation}
for any increasing function $\Phi:[0,+\infty)\mapsto\mathbb{R}$ such that $\Phi\circ\exp$ is convex on $\mathbb{R}$.
\end{theorem}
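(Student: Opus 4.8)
The plan is to realize both polynomials by circulants and then transfer the problem to a statement about compressions of diagonal matrices, where singular-value majorization applies. First I would take the circulant $C$ with spectrum $\{\lambda_1,\dots,\lambda_n\}$ and, by Theorem~\ref{Theorem.circulant.derivative}, identify the critical points $w_k$ of $p$ with the eigenvalues of $C_{n-1}$. By Proposition~\ref{Proposition.abs.val} the positive semidefinite circulant $\widehat{C}=\sqrt{CC^{*}}$ has spectrum $\{|\lambda_1|,\dots,|\lambda_n|\}$, so applying Theorem~\ref{Theorem.circulant.derivative} to $\widehat{C}$ identifies the critical points $\xi_k$ of $q$ with the eigenvalues of $\widehat{C}_{n-1}$; as a leading principal submatrix of a positive semidefinite matrix, $\widehat{C}_{n-1}$ is positive semidefinite, so the $\xi_k$ are nonnegative and coincide with its singular values.

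The crucial structural observation is that $C$ and $\widehat{C}$, being circulants, are diagonalized by one and the same Fourier matrix $F$, and with the same arrangement: $C=F\Lambda F^{*}$ and $\widehat{C}=F|\Lambda|F^{*}$, where $\Lambda=\mathrm{diag}(\lambda_j)$ and $|\Lambda|=\mathrm{diag}(|\lambda_j|)$. Consequently, passing to the first $n-1$ rows and columns corresponds to one and the same isometric embedding $G=F^{*}E_1:\mathbb{C}^{n-1}\to\mathbb{C}^{n}$ (with $E_1=[e_1,\dots,e_{n-1}]$, so $G^{*}G=I_{n-1}$), giving $C_{n-1}=G^{*}\Lambda G$ and $\widehat{C}_{n-1}=G^{*}|\Lambda|G$. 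In other words both submatrices are compressions of $\Lambda$ and of $|\Lambda|$ to the \emph{same} $(n-1)$-dimensional subspace (which is Pereira's differentiator, the compression to $\mathbf{1}^{\perp}$); this common subspace is exactly what makes $C_{n-1}$ and $\widehat{C}_{n-1}$ comparable.

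Now set $X=|\Lambda|^{1/2}G$, an $n\times(n-1)$ matrix, and use the polar form $\Lambda=|\Lambda|V$ with $V=\mathrm{diag}(\lambda_j/|\lambda_j|)$ unitary and commuting with $|\Lambda|$, so that $|\Lambda|^{1/2}V|\Lambda|^{1/2}=\Lambda$. This produces the two clean identities $X^{*}X=\widehat{C}_{n-1}$ and $X^{*}VX=C_{n-1}$. The heart of the argument, and the step I expect to be the main obstacle, is the assertion that the singular values of $C_{n-1}$ are weakly log-majorized by the $\xi_k$. Writing the singular value decomposition $X=W\Sigma Z^{*}$, I get $X^{*}VX=Z(\Sigma M\Sigma)Z^{*}$ with $M=W^{*}VW$; since $W$ has orthonormal columns and $V$ is unitary, $M^{*}M=W^{*}V^{*}(WW^{*})VW\preceq W^{*}W=I$, so $M$ is a contraction and $\sigma_j(M)\le1$ for all $j$. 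Horn's multiplicative singular-value inequalities applied to the product $\Sigma M\Sigma$ then give $\prod_{j=1}^{m}\sigma_{j}(C_{n-1})=\prod_{j=1}^{m}\sigma_{j}(\Sigma M\Sigma)\le\prod_{j=1}^{m}\sigma_{j}(\Sigma)^{2}\sigma_{j}(M)\le\prod_{j=1}^{m}\sigma_{j}(\Sigma)^{2}=\prod_{j=1}^{m}\xi_{[j]}$ for every $m$, which is precisely the desired weak log-majorization.

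Finally I would combine this with Weyl's inequalities $\prod_{j=1}^{m}|w_{[j]}|\le\prod_{j=1}^{m}\sigma_{j}(C_{n-1})$ and the transitivity of weak log-majorization to conclude $(\log|w_k|)\prec_{w}(\log\xi_k)$. Writing $\Phi(t)=g(\log t)$ with $g=\Phi\circ\exp$, which is increasing and convex by hypothesis, the standard fact that an increasing convex function preserves weak majorization yields $(\Phi(|w_k|))\prec_{w}(\Phi(\xi_k))$, the claim. I note that the hypothesis $\lambda_n=0$ is not actually required for this weak form; its only effect is that then $|\det C_{n-1}|=\det\widehat{C}_{n-1}=\tfrac1n\prod_{j<n}|\lambda_j|$, so the top-level inequality becomes an equality and the underlying log-majorization becomes a strong one. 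Zero entries (forced, for instance, by $\lambda_n=0$) are handled by a routine continuity/perturbation argument on the roots.
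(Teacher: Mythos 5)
Your proof is correct, and it does not follow the paper's treatment, because the paper in fact contains no proof of Theorem~\ref{Theorem.Patrick} at all: the statement is quoted from Cheung and Ng~\cite{Cheung_Ng_1}, whose companion-matrix argument requires $\lambda_n=0$, and the authors say explicitly at the end of Section~\ref{section:majorization} that they do not see how to use circulants to prove or improve it; their circulant machinery is used only for the surrogate Theorem~\ref{Theorem.Cheung_Ng.majorization}, whose proof stops at the observation that the $\sqrt{\eta_k}$ are the singular values of $C_{n-1}$ (since $A_{n-1}=C_{n-1}C_{n-1}^*$) and then invokes Weyl's theorem. You share their setup --- the circulant $C$, Theorem~\ref{Theorem.circulant.derivative} identifying the $w_k$ with the spectrum of $C_{n-1}$, Proposition~\ref{Proposition.abs.val} giving $\widehat{C}=\sqrt{CC^*}=F|\Lambda|F^*$ in the same Fourier frame, hence $C_{n-1}=G^*\Lambda G$ and $\widehat{C}_{n-1}=G^*|\Lambda|G$ for one isometry $G=F^*E_1$ --- but you supply exactly the ingredient the paper lacks: a comparison of $\sigma_j(C_{n-1})$ with the eigenvalues $\xi_k$ of $\widehat{C}_{n-1}$. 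That comparison is sound: $X^*X=\widehat{C}_{n-1}$ and $X^*VX=C_{n-1}$ hold because diagonal matrices commute (put $V_{jj}=1$ when $\lambda_j=0$), $M=W^*VW$ is a contraction, and Horn's multiplicative singular-value inequality (the three-factor form follows from the two-factor form by induction) gives $\prod_{j\leqslant m}\sigma_j(C_{n-1})\leqslant\prod_{j\leqslant m}\xi_{[j]}$; Weyl's product inequality and the standard transfer lemma for increasing $\Phi$ with $\Phi\circ\exp$ convex then yield \eqref{Cheung_Ng.majorization}. Your remark on zeros is also right, and no limiting argument is even needed: if $\xi_{[m]}=0$, the product inequality at level $m$ forces $|w_{[m]}|=0$, and all indices past the last positive entry are handled using only the monotonicity of $\Phi$. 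Two comments on what the two routes buy. First, your argument can be shortened: writing $C_{n-1}=X^*Y$ with $X=|\Lambda|^{1/2}G$ and $Y=V|\Lambda|^{1/2}G$, one has $Y^*Y=X^*X=\widehat{C}_{n-1}$, so a single application of Horn's two-factor inequality gives $\prod_{j\leqslant m}\sigma_j(C_{n-1})\leqslant\prod_{j\leqslant m}\sigma_j(X)\,\sigma_j(Y)=\prod_{j\leqslant m}\xi_{[j]}$, with no need for the SVD or the contraction $M$. Second, and more importantly, your route never uses $\lambda_n=0$, so it proves Cheung--Ng's conclusion for arbitrary polynomials; by the paper's own Theorem~\ref{Theorem.critical.squares} the vector $(\Phi(\xi_k))$ is itself weakly majorized by $(\Phi(\sqrt{\eta_k}))$, so your bound is sharper than Theorem~\ref{Theorem.Cheung_Ng.majorization} and settles affirmatively the question the authors leave open. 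For this reason you should isolate the key lemma as a standalone statement --- for any isometry $G$ and any normal $N$ with polar decomposition $N=V|N|$, the singular values of $G^*NG$ are weakly log-majorized by the eigenvalues of $G^*|N|G$ --- and attach precise references for Horn's and Weyl's inequalities, since the entire improvement rests on it.
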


The circulant approach allows us to avoid the condition $\lambda_n=0$. But we are able only to prove the following close
result.

\vspace{2mm}

\noindent\textbf{Theorem 1.2.}\textit{
Let $p$ be a polynomial of degree $n\geqslant2$ with zeroes $\lambda_j$, $j=1,\ldots,n$, and
critical points $w_k$, $k=1,\ldots,n-1$. If $q$ be a polynomial with zeroes $|\lambda_j|^2$, $j=1,\ldots,n$, and
critical points $\eta_k$, $k=1,\ldots,n-1$, then
\begin{equation}\label{Cheung_Ng.majorization.like.2}
(\Phi(|w_1|),\ldots,\Phi(|w_{n-1}|))\prec_{w}(\Phi(\sqrt{\eta_1}),\ldots,\Phi(\sqrt{\eta_{n-1}})),
\end{equation}
for any increasing function $\Phi:[0,+\infty)\mapsto\mathbb{R}$ such that $\Phi\circ\exp$ is convex on $\mathbb{R}$.
}
\begin{proof}
For certainty, let us fix the arrangement of the numbers $\lambda_j$, $j=1,\ldots,n$, say, as in~\eqref{order.EVal},
and construct the unique circulant $C$ with the spectrum $\{\lambda_1,\ldots,\lambda_{n}\}$.
By Proposition~\ref{Proposition.abs.val}, the unique circulant matrix with eigenvalues $|\lambda_j|^2$, $j=1,\ldots,n$,
is $A=CC^*$. By Theorem~\ref{Theorem.circulant.derivative}, the spectrum of the matrix $A_{n-1}$
is $\{\xi_1,\ldots,\xi_{n-1}\}$, while the spectrum of $C_{n-1}$ is $\{w_1,\ldots,w_{n-1}\}$.
By construction, we have $A_{n-1}=C_{n-1}C^*_{n-1}$, so the numbers $\sqrt{\eta_k}$, $k=1,\ldots,n-1$, are the singular values (see
e.g.~\cite[p.~135]{Horn_Johnson}) of matrix $C_{n-1}$. If now we arrange the numbers $w_k$ and~$\eta_k$, $k=1,\ldots,n-1$,
as follows
$$
|w_1|\geqslant|w_2|\geqslant\cdots\geqslant|w_{n-1}|\qquad\text{and}\qquad|\eta_1|\geqslant|\eta_2|\geqslant\cdots\geqslant|\eta_{n-1}|,
$$
that is always possible, then \cite[Theorem~3.3.13]{Horn_Johnson} implies~\eqref{Cheung_Ng.majorization.like.2}.
\end{proof}

Note that in a similar way, one can obtain that (Theorem~\ref{Theorem.critical.squares}) for any increasing function $\Phi:[0,+\infty)\mapsto\mathbb{R}$ such that $\Phi\circ\exp$ is convex on $\mathbb{R}$,
\begin{equation*}\label{crit.point.majorization}
(\Phi(\xi_1),\ldots,\Phi(\xi_{n-1}))\prec_{w}(\Phi(\sqrt{\eta_1}),\ldots,\Phi(\sqrt{\eta_{n-1}})),
\end{equation*}
where $\xi_k$ and $\eta_k$ are the critical points of the polynomial $\displaystyle\prod_{j=1}^n(z-|\lambda_j|)$ and
$\displaystyle\prod_{j=1}^n(z-|\lambda_j|^2)$, respectively, so in the case $\lambda_n=0$, Theorem~\ref{Theorem.Patrick} implies
Theorem~\ref{Theorem.Cheung_Ng.majorization}. However, we do not see how to use circulants to prove and improve Theorem~\ref{Theorem.Patrick}.

%\setcounter{equation}{0}
%%%%%%%%%%%%%%%%%%%%%%%%%%%%%%%%%%%%%%%%%%%%%%%%%%%%%%%%%%%%%%%%%%%%%%%%%%%%%%%%%%%%
%\section{Higher order Schoenberg conjectures}
%%%%%%%%%%%%%%%%%%%%%%%%%%%%%%%%%%%%%%%%%%%%%%%%%%%%%%%%%%%%%%%%%%%%%%%%%%%%%%%%%%%%

\setcounter{equation}{0}
%%%%%%%%%%%%%%%%%%%%%%%%%%%%%%%%%%%%%%%%%%%%%%%%%%%%%%%%%%%%%%%%%%%%%%%%%%%%%%%%%%%%
\section{Questions and open problems}\label{Section:open.questions}
%%%%%%%%%%%%%%%%%%%%%%%%%%%%%%%%%%%%%%%%%%%%%%%%%%%%%%%%%%%%%%%%%%%%%%%%%%%%%%%%%%%%

In this Section, we enumerate some problems related to circulants and root location of
polynomials.

\begin{itemize}
\item[1)] If $p(z)$ is a characteristic polynomial of a circulant matrix $C$, then
by Schur's theorem, for the critical points $w_k$ of $p(z)$, one has
\begin{equation*}
\sum_{k=1}^{n-1}|w_k|^{2m}\leqslant Tr\left(C^m_{n-1}(C_{n-1}^*)^m\right).
\end{equation*}
However, even for $m=3$ it is rather a technically difficult problem to express $Tr\left(C^m_{n-1}(C_{n-1}^*)^m\right)$
via roots of the polynomial $p(z)$. We are sure that, anyway, the case $m=3$ can be covered at least for the
situation when the sum of roots of $p(z)$ equals zero $(c_0=0)$. But we postpone solution of this problem,
since so far, we do not see that anything new can be found here from methodological point of view. Nevertheless,
estimate for $\sum_{k=1}^{n-1}|w_k|^{6}$ (stronger than the one provided by Schmeisser's theorem) would be quite
interesting.

 In view of the theory presented in~\cite{Bark_thesis,Bark_lectures,Pereira,Malamud,Cheung_Ng_1,Cheung_Ng_2} and in the present paper, it is interesting to know wether it is possible to estimate
$$\displaystyle \sum_{k=1}^{n-1}|w_k|^{2m+1},$$
for $m=0,1,2,\ldots$ In this case, Proposition~\ref{Proposition.abs.val} may be of use.
\item [2)] Let again $p(z)$ be a monic polynomial and $C$ is a corresponding circulant matrix such that $p(z)=\det(zI-C)$.
Then for any $\alpha\in\mathbb{C}$, the polynomial $p(z)+\alpha p'(z)$ is the characteristic polynomial of the following
matrix
\begin{equation*}
\begin{pmatrix}
c_0-\alpha    &c_1    &c_2   &\dots &c_{n-2}&c_{n-1}\\
c_{n-1}&c_0    &c_1   &\dots &c_{n-3}&c_{n-2}\\
c_{n-2}&c_{n-1}&c_0   &\dots &c_{n-4}&c_{n-3}\\
\vdots &\vdots &\vdots&\ddots&\vdots &\vdots \\
c_2    &c_3    &c_4   &\dots &c_0    &c_1    \\
c_1    &c_2    &c_3   &\dots &c_{n-1}&c_0    \\
\end{pmatrix}
\end{equation*}
Thus, this matrix can be used to study and estimates the roots
of the polynomial $p(z)+\alpha p'(z)$. For example, it might be
used to attack Conjecture 1 in~\cite{Borcea_Shapiro}.
\item[3)] The relations between the eigenvalues of a circulant $C$ and its entries $c_k$ are well known. It would be interesting to find relations between $c_k$ and the critical points of the polynomial $\det(zI-C)$. The eigenvectors of the matrix $C_{n-1}$ are of particular interest.
\end{itemize}

%%%%%%%%%%%%%%%%%%%%%%%%%%%%%%%%%%%%%%%%%%%%%%%%%%%%%%%%%%%%%%%%%%%%%%%%%%%%%%%%%%%%
\section*{Acknowledgements}
The work of the first author was partially supported by 
National Science Foundation of China, grant no. BC0710062. The second author is Shanghai Oriental Scholar
whose work was supported by Russian Science Foundation, grant no. 14-11-00022.
%%%%%%%%%%%%%%%%%%%%%%%%%%%%%%%%%%%%%%%%%%%%%%%%%%%%%%%%%%%%%%%%%%%%%%%%%%%%%%%%%%%%

\end{document}